\newtheorem{theo}{Theorem}[]
\newtheorem{prop}[theo]{Proposition}
\newtheorem{lemme}[theo]{Lemma}
\newtheorem{definition}[theo]{Definition}
\newtheorem{coro}[theo]{Corollary}
\newtheorem{remarque}[theo]{Remark}
\DeclareMathOperator{\diverg}{div}
\DeclareMathOperator{\Hess}{Hess}
\begin{document}

\begin{abstract}
Using a maximum principle for self-shrinkers of the mean curvature flow, we give new proofs of a rigidity theorem for rotationally symmetric compact self-shrinkers and a result about the asymptotic behavior of self-shrinkers. This comparison argument also implies a linear bound for the second fundamental form of self-shrinking surfaces under natural assumptions. As a consequence, translating solitons can be related to these self-shrinkers.
\end{abstract}

\title [A maximum principle for self-shrinkers] {A maximum principle for self-shrinkers and some consequences}
\author{Antoine Song}
\date{}
\maketitle

\begin{center}
\textsc{\small{}}
\end{center}

\section*{}
	A hypersurface $\Sigma \subset \mathbb{R}^{n+1}$ is a self-shrinker for the mean curvature flow, normalized so that it shrinks in unit time to the origin $0\in\mathbb{R}^{n+1}$, if it satisfies the following equation:
\begin{equation} \label{self}
H = \frac{\langle x , \nu \rangle}{2}
\end{equation}
where $\nu$ is the outer normal unit vector, $H = \diverg (\nu)$ denotes the mean curvature and $x$ is the position vector. Such a surface will be called more briefly a self-shrinker. Equivalently, one can say that the set of hypersurfaces $\{\sqrt{-t} \Sigma ; t<0 \}$ is a solution of the mean curvature flow, i.e. verifies the equation :
\begin{equation} \label{flot}
(\partial_t x)^{\bot}=-H \nu.
\end{equation}

	Self-shrinkers provide models for blow-ups at singularities of mean curvature flow: consider a family of hypersurfaces evolving by the mean curvature flow and starting from a closed embedded hypersurface and focus on a point of singularity, then rescalings yield a subsequence converging weakly to a "tangent flow", which satisfies (\ref{self}) (see \cite{Ilmanen2}, \cite{White}, \cite{CM1}). The classification of embedded self-shrinkers proved to be a difficult problem. The simplest examples are given by cylinders $\mathbb{S}^k \times \mathbb{R}^{n-k}$ where $\mathbb{S}^k$ is the $k$-sphere of radius $\sqrt{2k}$. If $n\geq 2$, Huisken, then Colding and Minicozzi showed that those hypersurfaces were the only ones with polynomial volume growth and whose mean curvature $H$ is nonnegative (\cite{Huisken1}, \cite{Huisken2}, \cite{CM1} Theorem $0.17$). When $n=1$, straight lines passing through the origin and the circle of radius $\sqrt{2}$ are the only self-shrinking embedded curves \cite{AbreschLanger}, but as soon as the dimension is greater than or equal to $2$, there are non trivial self-shrinkers, as the Angenent torus \cite{Angenent}. See also \cite{KKM} and \cite{Nguyen} for construction of complete embedded self-shhrinkers of high genus.
	
	The aim of this note is to present some applications of a maximum principle adapted to self-shrinking hypersurfaces viewed locally as graphs (Proposition \ref{PMsimple}, Corollary \ref{comparaison}). It will be used to rule out some hypersurfaces from the set of self-shrinkers. The usual maximum principle states for example that the distance between two compact hypersurfaces moving by the mean curvature flow is non-decreasing in time. Here, we prove a maximum principle for graphs, the advantage being that within a hypersurface, sometimes one can find two subsets forming graphs of two functions whose difference achieves a minimum, whereas a minimal distance (even a local one) between the two subsets is not achieved. 
	
	Using this remark (see Proposition \ref{PMsimple}, Corollary \ref{comparaison}), we prove that the only embedded rotationally symmetric compact self-shrinkers are either a $\mathbb{S}^1 \times \mathbb{S}^{n-1}$ or a round sphere with the appropriate radius (this fact is part of a more general statement covering the non compact case proven by Kleene and M{\o}ller \cite{KleeneMoller}):

\begin{theo} [\cite{KleeneMoller}]
Let $\Sigma^n \subset \mathbb{R}^{n+1}$ be a compact embedded rotationally symmetric hypersurface. If $\Sigma$ is a self-shrinker then either it is the sphere of radius $\sqrt{2n}$ centered at the origin or a $\mathbb{S}^1 \times \mathbb{S}^{n-1}$.
\end{theo}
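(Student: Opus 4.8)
The plan is to reduce everything to the profile curve and then run an Alexandrov-type reflection argument powered by the comparison principle of Corollary~\ref{comparaison}, rather than by the usual moving planes. Writing $\rho = (x_2^2 + \cdots + x_{n+1}^2)^{1/2}$, the rotational symmetry means that $\Sigma$ is swept out by a profile curve $\gamma$ in the half-plane $\{(x_1,\rho):\rho\ge 0\}$. Compactness and embeddedness force $\gamma$ to be either a simple closed curve contained in the open half-plane $\{\rho>0\}$, in which case $\Sigma$ is diffeomorphic to $\mathbb{S}^1\times\mathbb{S}^{n-1}$ and the second alternative holds, or a simple arc meeting the axis $\{\rho=0\}$ at exactly its two endpoints, which it must do orthogonally for $\Sigma$ to close up smoothly; in this case $\Sigma$ is a topological sphere and the whole point is to prove rigidity. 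Writing $\gamma$ locally as a graph $\rho=u(x_1)$, equation~\eqref{self} becomes, for the appropriate choice of unit normal, the second-order ODE
\[
\frac{u''}{1+(u')^2} \;=\; \frac{n-1}{u} \;-\; \frac{u - x_1 u'}{2},
\]
whose model solutions are the round sphere $u=\sqrt{2n-x_1^2}$ and the cylinder $u\equiv\sqrt{2(n-1)}$.

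The engine is the observation that reflection across \emph{any} hyperplane through the origin preserves \eqref{self}, since the equation is $O(n+1)$-invariant (though crucially not translation-invariant). Let $\Sigma^\ast$ be the reflection of $\Sigma$ across $\{x_1=0\}$, again a self-shrinker. Rather than sliding a plane, I would compare the part of $\Sigma$ on one side of $\{x_1=0\}$ with the reflection of the part on the other side: these are two graphs over a common domain, and — exactly the phenomenon stressed in the introduction — their difference attains an interior minimum even though the distance between the two caps is not achieved. Feeding this into Proposition~\ref{PMsimple} and Corollary~\ref{comparaison} forces the two caps to coincide, so $\gamma$ is symmetric across $\{x_1=0\}$ and $u$ is even. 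To upgrade this to full roundness I would iterate the comparison against reflections in the whole rotational family of hyperplanes through the origin: each such reflection that can be brought into one-sided contact with $\Sigma$ produces a new symmetry axis, and a hypersurface rotationally symmetric about two distinct axes through the origin and symmetric across the corresponding hyperplanes is a sphere centered at $0$, whose radius is then pinned to $\sqrt{2n}$ by \eqref{self}. Equivalently, once the profile is even one may invoke uniqueness for the ODE above with the orthogonality (smooth cap) condition at the axis to identify $\gamma$ with the semicircle of radius $\sqrt{2n}$.

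The main obstacle, and the reason the comparison principle is indispensable, is the absence of translation and dilation invariance: spheres of other radii and translated spheres are not self-shrinkers, so one cannot run standard moving planes or compare with a sliding family of spheres, and the zeroth-order coefficient in the linearized graph equation has the wrong sign, so a global PDE maximum principle fails outright. The entire difficulty is therefore concentrated in \emph{producing the correct one-sided touching configuration} to feed into Proposition~\ref{PMsimple}/Corollary~\ref{comparaison}. For the reflection across $\{x_1=0\}$ this contact is manufactured along the fixed hyperplane itself, which is clean; obtaining further contacts to reach roundness is the delicate step. A secondary technical point is the behavior at the rotation axis, where $\Sigma$ caps off and the graphs degenerate, so one needs a boundary Hopf-type version of the comparison at the two points where $\gamma$ meets $\{\rho=0\}$ orthogonally.
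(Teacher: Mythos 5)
Your reduction to the profile curve and the torus/sphere dichotomy matches the paper (modulo one elided point: the paper first uses the usual avoidance principle to show the profile has only \emph{one} component), but the rigidity step has a genuine gap. Proposition \ref{PMsimple} and Corollary \ref{comparaison} are contradiction devices: they say that a strictly positive vertical gap between two \emph{disjoint} pieces of self-shrinkers, minimized at interior points with $\langle \nu_a , e_{n+1}\rangle \neq 0$, is impossible. They cannot ``force the two caps to coincide.'' In your reflection setup the cap $\Sigma\cap\{x_1<0\}$ and the reflection of $\Sigma\cap\{x_1>0\}$ share the boundary sphere $\Sigma\cap\{x_1=0\}$, where the height gap vanishes; unless you already know that one cap lies entirely on one side of the other, the gap function can change sign and need not attain a positive interior minimum, so the machinery never fires. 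Producing that one-sided configuration is exactly what sliding planes would do in a classical Alexandrov argument, and, as you yourself observe, sliding is unavailable because only reflections through hyperplanes containing the origin preserve \eqref{self}; so the ``clean contact along the fixed hyperplane'' is asserted, not manufactured. The same defect kills the upgrade to roundness: reflections across other hyperplanes through the origin destroy the rotational symmetry of $\Sigma$ and again cannot be brought into one-sided contact, while the ODE fallback is not a uniqueness statement --- for each cap position $x_0$ on the axis the singular profile ODE admits a smooth cap solution, so even, doubly-capped profiles form a shooting family, and excluding $x_0\neq\sqrt{2n}$ is precisely the hard ODE analysis of Kleene--M{\o}ller \cite{KleeneMoller} that the argument was meant to bypass.

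The paper's route is different and avoids these obstructions: it never establishes any reflection symmetry. Instead it reduces rigidity to Huisken's theorem \cite{Huisken1} by showing $H\geq 0$ on the spherical $\Sigma$. Assuming $H<0$ somewhere, a technical lemma --- proved with touching planes (the usual avoidance principle) together with the elliptic identity $\Delta \tau = \langle X , A.u^T \rangle - |A|^2 \tau$ for $\tau = \langle \nu , u\rangle$ --- locates a ``dip'': a sub-arc $\gamma|_{[s_1,s_2]}$ of the profile that is a graph over the axis with downward-pointing normal, whose height attains an interior maximum exceeding both endpoint values. Splitting $\Sigma$ along the corresponding $\mathbb{S}^{n-1}$ into $\mathcal{S}^+$ and $\mathcal{S}^-$ and applying Corollary \ref{comparaison} with $e_{n+1}\approx \pm u$ then yields the contradiction: there the two compared pieces really are disjoint graphs with a positive interior minimal gap, which is exactly the configuration your reflection argument fails to produce. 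If you want to salvage your approach, the missing ingredient is a proof of the one-sided ordering of cap and reflected cap; as it stands, the symmetry claim, the iteration to a second axis, and the ODE uniqueness step are all unsupported.
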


 Then, with the same argument, we show that self-shrinkers are weakly asymptotic to cones, a result known by Ilmanen (\cite{Ilmanen} p.8, though I didn't find his proof). In the two-dimensional case, some additional information is given.

 \begin{theo} [\cite{Ilmanen}]
 If $\Sigma \subset \mathbb{R}^{n+1}$ is a complete properly immersed self-shrinker, then there exists a cone $\mathcal{C}$ such that $$\lambda \Sigma \to \mathcal{C} \text{  when  } \lambda \to 0^+$$
locally in the Hausdorff metric on closed sets.
 
 In the case $n=2$, if the number of connected components of $S(0,r) \cap \Sigma$ is bounded above, then $\mathcal{C} \cap \mathbb{S}^2$ has 2-dimensional Lebesgue measure $0$.
 \end{theo}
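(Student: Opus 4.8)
The plan is to establish the two assertions separately. For the existence of the asymptotic cone, the decisive structural observation is that all the dilates $\lambda\Sigma$ lie on one and the same mean curvature flow. Writing $M_t=\sqrt{-t}\,\Sigma$ for $t<0$, equation (\ref{flot}) shows that $M_t$ solves the flow, and one checks directly that $\lambda\Sigma=M_{-\lambda^2}$, the flow being parabolically self-similar in the sense that $\mu M_t=M_{\mu^2 t}$ for every $\mu>0$. I would first record uniform local area bounds for the family $\{\lambda\Sigma\}_{0<\lambda\le 1}$. Since $\Sigma$ is properly immersed it has at most Euclidean volume growth, $\mathcal{H}^n(\Sigma\cap B_s)\le Cs^n$, and scaling gives $\mathcal{H}^n(\lambda\Sigma\cap B_R)=\lambda^n\,\mathcal{H}^n(\Sigma\cap B_{R/\lambda})\le CR^n$ uniformly in $\lambda$. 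By the compactness theorem for integral varifolds of locally bounded mass, any sequence $\lambda_i\to 0^+$ then has a subsequence along which $\lambda_i\Sigma$ converges.

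Any such subsequential limit is a cone: if $\lambda_i\Sigma\to V$ in the local Hausdorff metric, then for each fixed $\mu>0$ also $\mu\lambda_i\Sigma\to\mu V$; but $\mu\lambda_i\to 0^+$ as well, so $V$ and $\mu V$ are both limits of dilates tending to $0$, which forces $\mu V=V$. Being invariant under every dilation, $V$ is a cone $\mathcal{C}$. I expect the real difficulty to be the passage from subsequential to full convergence $\lambda\Sigma\to\mathcal{C}$, that is, the uniqueness of the blow-down, since a priori $\Sigma$ could oscillate between different cones at different scales. Here I would use the comparison principle of Proposition \ref{PMsimple} and Corollary \ref{comparaison}: sliding a fixed reference cone against $\Sigma$ and invoking the maximum principle to forbid a first interior contact pins down on which side of the cone $\Sigma$ must lie for all large radii, thereby excluding oscillation and singling out the limit. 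This uniqueness is the main obstacle; the mere existence of a subsequential cone is soft by comparison.

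For the two-dimensional refinement, note that $\lambda\Sigma\cap\mathbb{S}^2=\lambda\bigl(\Sigma\cap S(0,1/\lambda)\bigr)=:\Gamma_r$ with $r=1/\lambda$, so the rescaled cross-sections $\Gamma_r\subset\mathbb{S}^2$ converge as $r\to\infty$ to the link $\mathcal{C}\cap\mathbb{S}^2$, and by hypothesis each $\Gamma_r$ has at most $N$ connected components. The coarea formula applied to the radial function $\rho=|x|$ on $\Sigma$, together with $|\nabla^\Sigma\rho|\le 1$ and the area bound $\mathcal{H}^2(\Sigma\cap B_R)\le CR^2$, gives $\int_0^R \mathcal{H}^1(\Sigma\cap S(0,r))\,dr\le CR^2$, so along a sequence $r_j\to\infty$ of regular values one has $\mathcal{H}^1(\Gamma_{r_j})=r_j^{-1}\,\mathcal{H}^1(\Sigma\cap S(0,r_j))\le C'$ uniformly. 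A family of subsets of $\mathbb{S}^2$ with at most $N$ components and uniformly bounded length is, by the lower semicontinuity of length (Golab's theorem) applied componentwise, precompact with limits of finite one-dimensional measure; hence $\mathcal{C}\cap\mathbb{S}^2$ is a finite union of rectifiable curves and carries no two-dimensional Lebesgue measure. The delicate point is again the length control: without the bound on the number of components the cross-sections could spawn ever more pieces and accumulate onto a set of positive area, which is exactly what the hypothesis rules out.
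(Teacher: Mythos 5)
Your first part has a genuine gap, and it sits exactly where you locate the ``main obstacle.'' The argument that a subsequential limit $V$ of $\lambda_i\Sigma$ is a cone is circular: from $\lambda_i\Sigma\to V$ you correctly get $\mu\lambda_i\Sigma\to\mu V$, but the step ``$V$ and $\mu V$ are both limits of dilates tending to $0^+$, hence equal'' already presupposes that all blow-down limits coincide, which is precisely the uniqueness you postpone. If $\Sigma$ oscillated between scales, $V$ and $\mu V$ would simply be two different subsequential limits and no contradiction arises; so neither the cone property nor uniqueness is actually proved. Moreover, the fix you gesture at --- sliding a fixed reference cone against $\Sigma$ and invoking Proposition \ref{PMsimple} or Corollary \ref{comparaison} --- cannot work as stated: those results compare two pieces of \emph{self-shrinkers}, and a generic cone is not a self-shrinker (only minimal cones are, and they are singular at the vertex), so there is no avoidance principle available for a static cone. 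What closes both gaps simultaneously in the paper is a single quantitative barrier argument with the round self-shrinking sphere $S(0,\sqrt{2n})$: if a point of $Y_r=\Sigma\setminus B(0,r)$ sat radially over a point of $S(0,r)$ at distance more than $\sqrt{2n}$ from $K_r^1=\Sigma\cap S(0,r)$, one could insert the spherical self-shrinker as a graph (Proposition \ref{PMsimple}, with Sard's lemma to choose the axis $e_{n+1}$) and contradict the maximum principle. Rescaling gives the monotonicity relation (\ref{monotonie}), $K_r^\mu\subset L_r^\lambda$ for $\mu\le\lambda$, i.e.\ later traces on $S(0,r)$ stay in the $\sqrt{2n}\lambda$-neighborhood of earlier ones; the sandwich $K_r^\lambda\subset M_r^\lambda$, $d_H(K_r^\lambda,M_r^\lambda)\le\sqrt{2n}\lambda$, with the $M_r^\lambda$ a nested family of nonempty compacts shrinking to $K_r$, then yields \emph{full} Hausdorff convergence $K_r^\lambda\to K_r$ with no compactness extraction, no varifold theory, and no separate uniqueness step (the cone structure is immediate from the scaling identity $\frac{1}{s}K_{sr}^{\lambda}=K_r^{\lambda/s}$). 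This shrinking-sphere comparison is the missing idea; your varifold compactness and area bounds are correct but do not substitute for it.

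Your second part, by contrast, is essentially sound and takes a genuinely different route from the paper's. Both arguments rest on the same two inputs --- the coarea formula combined with the Ding--Xin quadratic area growth \cite{DingXin} (note that ``properly immersed implies Euclidean volume growth'' is their theorem, not a triviality), and the bound on the number of components. The paper argues by contradiction: if $\mathrm{Vol}_2(L_r^\lambda)\ge\epsilon$ for all small $\lambda$, a covering lemma (where connectedness, hence the component bound, enters) forces $\mathrm{Vol}_1(K_r^1)=\sigma(r)r$ with $\sigma\to\infty$, violating quadratic growth via coarea. You run the implication forwards: coarea plus quadratic growth produce $r_j\to\infty$ with $\mathcal{H}^1(\Gamma_{r_j})$ uniformly bounded, and the component bound lets you apply Go\l{}ab's semicontinuity componentwise (Hausdorff limits of continua are continua, and the full convergence $\Gamma_r\to\mathcal{C}\cap\mathbb{S}^2$ from part one identifies the limit) to get $\mathcal{H}^1(\mathcal{C}\cap\mathbb{S}^2)<\infty$. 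This is correct, avoids the paper's contradiction structure, and in fact yields slightly more than the stated conclusion: finite length and rectifiability of the link, not merely vanishing two-dimensional measure. But it is conditional on part one, so the gap above must be repaired first.
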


Finally in the case of surfaces, our result in Section \ref{principale} gives a linear bound for the norm of the second fundamental form $|A|$ of self-shrinking surfaces, under the assumption that the "curvature concentration" is bounded, i.e. that these surfaces satisfy a local integral bound for $|A|$ (see Definition \ref{concentration}). This is a natural assumption in the context of weak blowups (see Proposition \ref{naturality}). It is proved that:
 
 \begin{theo}
If $\Sigma \subset \mathbb{R}^{3}$ is a complete properly embedded self-shrinker of finite genus $g$ such that "the curvature concentration is bounded by $\kappa$", then
 $$\exists C, \forall x \in \Sigma, |A(x)| \leq C(1+ |x|).$$ 

Moreover for such a surface with ends that are "$\delta$-separated at infinity", the constant $C$ only depends on $g$, $\delta$, $\kappa$ and on a "bound for the topology of $\Sigma$".

 \end{theo}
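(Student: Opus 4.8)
The plan is to argue by contradiction via a point-selection and blowup scheme that reduces the estimate to a Bernstein-type statement about complete minimal surfaces. Suppose no constant $C$ works; then there is a sequence $x_i \in \Sigma$ with $|A(x_i)|/(1+|x_i|) \to \infty$. Since $\Sigma$ is properly embedded, $|A|$ is bounded on each compact piece $\bar B_M \cap \Sigma$, so the $x_i$ cannot stay in a bounded region: necessarily $|x_i| \to \infty$, and hence $|A(x_i)| \to \infty$ as well. The first step is a Schoen-type point-picking argument producing points $p_i$ with $\lambda_i := |A(p_i)| \to \infty$, still satisfying $\lambda_i/(1+|p_i|) \to \infty$, and such that $|A| \le 2\lambda_i$ on the intrinsic ball of radius $R_i \lambda_i^{-1}$ about $p_i$, with $R_i \to \infty$. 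This selects the scale $\lambda_i^{-1}$ at which the curvature is nearly maximal.

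I would then study the rescaled surfaces $\Sigma_i := \lambda_i(\Sigma - p_i)$, which satisfy $|A_{\Sigma_i}(0)| = 1$ and $|A_{\Sigma_i}| \le 2$ on balls of radius $R_i \to \infty$. Two facts drive the limit. First, the self-shrinker equation rescales to $H_{\Sigma_i}(\tilde x) = \lambda_i^{-1} H_\Sigma(x)$ with $|H_\Sigma(x)| \le |x|/2 \le (|p_i| + R\lambda_i^{-1})/2$ on the ball $B_R(0)$; dividing by $\lambda_i$ and using $|p_i|/\lambda_i \to 0$ (the point-selection keeps $\lambda_i/(1+|p_i|)\to\infty$) gives $H_{\Sigma_i} \to 0$, so any limit is \emph{minimal}. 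Second, because $\int |A|^2 \, d\mathcal{H}^2$ is scale invariant in dimension two, the curvature-concentration hypothesis yields $\int_{\Sigma_i \cap B_R} |A_{\Sigma_i}|^2 = \int_{\Sigma \cap B_{R/\lambda_i}(p_i)} |A|^2 \le \kappa$ for large $i$, since $R/\lambda_i$ lies far below the natural scale at $p_i$ (here Definition \ref{concentration} enters decisively, and this is why the statement is confined to surfaces in $\mathbb{R}^3$). With the uniform curvature bound and the uniform $L^2$ bound, standard compactness produces smooth, multiplicity-one convergence (using embeddedness) on compact sets to a complete, properly embedded, \emph{nonflat} minimal surface $\Sigma_\infty \subset \mathbb{R}^3$ of finite total curvature $\int |A_\infty|^2 \le \kappa$ and finite genus controlled by $g$.

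It remains to contradict the existence of such a $\Sigma_\infty$, and this is the crux. By the Osserman theory of complete embedded minimal surfaces of finite total curvature, $\Sigma_\infty$ is conformally a punctured compact surface whose finitely many ends are asymptotically planar or catenoidal, and being nonflat it must carry a genuine non-planar end or nontrivial topology; Gauss--Bonnet bounds the number of ends and the genus in terms of $\kappa$ and $g$. The goal is to exclude this picture using the maximum principle of Proposition \ref{PMsimple} and Corollary \ref{comparaison} together with the properness of $\Sigma$: the two nearly parallel sheets of a catenoidal end of $\Sigma_\infty$, or the thin neck joining two handles, descend to pairs of graphical pieces of $\Sigma$ near $p_i$ to which the comparison argument applies, while the assumption that the ends of $\Sigma$ are $\delta$-separated at infinity prevents these sheets from being pulled apart, forcing the difference of the two graph functions to attain an interior minimum and violating the strong maximum principle. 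I expect this exclusion to be the main difficulty, since one must transfer the minimal-surface structure back to the self-shrinker at exactly the right scale. Finally, tracking the dependence of each estimate only on $g$, $\delta$, $\kappa$ and the assumed bound on the topology of $\Sigma$ makes the contradiction quantitative and shows that the resulting $C$ is controlled by these quantities alone, which gives the second assertion.
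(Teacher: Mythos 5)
Your first half --- point selection at nearly-maximal curvature scale, rescaling $\mathbf{S}_k=\mu_k(\Sigma-p_k)$, minimality of the limit from $H=\langle x,\nu\rangle/2$ and $\mu_k/(1+\|p_k\|)\to\infty$, the scale invariance of $\int|A|^2$ in dimension two combined with the curvature concentration bound, and multiplicity-one smooth convergence to a nonflat complete embedded minimal surface --- is essentially the paper's argument (Lemma \ref{technical} and the surrounding discussion). The genuine gaps are all in your exclusion step. First, you invoke $\delta$-separation of the ends to rule out the limit, but the first statement of the theorem makes no such assumption, so as written your argument cannot prove Theorem \ref{MAIN}. The paper instead pins the limit down to an explicit surface: genus $0$ follows from $\|p_k\|\to\infty$ together with finite genus of $\Sigma$ and multiplicity one, Osserman gives finite topology and properness, and L\'opez--Ros/Meeks--Rosenberg then force $\mathbf{S}$ to be a \emph{catenoid} (the plane is excluded since $K(0)=-1/2$, the helicoid by finite total curvature). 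The contradiction is then obtained by a case analysis on $v=\lim p_k/\|p_k\|$ relative to the catenoid axis $e'_3$: if $v\parallel e'_3$, Proposition \ref{piege} traps the component $\Sigma^+_k$ in a thin tube going to infinity, incompatible with its nearly planar shape; if $v\not\parallel e'_3$, the usual maximum principle forces both components to meet $S(0,2)$, which together with finite genus contradicts proper embeddedness. None of this uses $\delta$-separation, and your proposal contains no substitute for it.

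Second, even in the $\delta$-separated setting your stated mechanism is too coarse: two disjoint graphical sheets do not violate any \emph{static} strong maximum principle (a catenoid itself has such sheets); the contradiction in Proposition \ref{PMsimple} is parabolic --- the minimum of the graph difference is nondecreasing under the flow while self-shrinking scales it by $\sqrt{-t}$ --- and applying it requires verifying the hypotheses of Corollary \ref{comparaison}: the minimum must neither be attained on the neck curve $c_k$ where the two sheets meet nor escape to infinity (this is precisely what conditions (2), (3) and $\delta$-separation secure), and comparable points must exist at all (condition (1)), which the paper arranges by showing $\mathcal{S}^+_k$ intersects the self-shrinking sphere $S(0,2)$ and choosing $e_3$ along $(p_k-y)/\|p_k-y\|$ via Sard. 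Moreover, when $v$ lies in the plane orthogonal to $e'_3$ no graph comparison applies; the paper handles this case (in Theorem \ref{main}) by rotating $\mathcal{S}^+_k$ against $\mathcal{S}^-_k$ and invoking the usual maximum principle, and it is \emph{there} that $\delta$-separation and Theorem \ref{cone} are actually used. Finally, for the uniform constant you cannot simply ``track dependence'': the blowup must run along a sequence of \emph{different} surfaces $\mathcal{S}_k\in\mathfrak{F}^{\delta,\kappa}_{g,R}$, and then $\|p_k\|\to\infty$ is no longer a consequence of properness of a single surface --- the paper needs the Colding--Minicozzi compactness theorem for $\mathcal{F}_g$ to obtain it, and the bounded-topology hypothesis $R$ to conclude the limit has genus $0$; both ingredients are missing from your proposal.
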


For more precise statements of this result, see Definition \ref{separate}, Definition \ref{bounded}, Theorem \ref{MAIN} and Theorem \ref{main}.

As in \cite{CM2}, the surfaces in the second part of the theorem are supposed to be homeomorphic to closed surfaces with finitely many disjoint disks removed. The proof is based on a blow-up argument, the compactness for self-shrinkers \cite{CM2}, the maximum principle and some results about classical minimal surfaces. Consequently, it is pointed out that translating solitons of the mean curvature flow can model regions of $\Sigma$ far from the origin (Corollary \ref{translating}).

\subsection*{Acknowledgement} 
I would like to thank Niels M{\o}ller for proofreading a preliminary version of this note and for his numerous suggestions. I also wish to thank my professors Fernando C. Marques, Olivier Biquard and Laurent Hauswirth for their guidance.

 \vspace{1cm}

 \section{A maximum principle for embedded self-shrinkers}
 
 The mean curvature flow equation for graphs is given by (\cite{Mantegazza} p.$10$):
 
 \begin{lemme}
 Suppose that $\phi_t : M \to \mathbb{R}^{n+1}$ are smooth hypersurfaces moving by mean curvature and are graphs on the open subset $\Omega$ of the hyperplane $\langle e_1 , ... , e_n \rangle \subset \mathbb{R}^{n+1}$, that is, there exists a smooth function $f : \Omega \times [0,T) \to \mathbb{R}$ with $$\phi_t(p) = (x_1(p), ... , x_n(p), f(x_1(p), ... , x_n(p), t)),$$ then

\begin{equation} \label{graphe}
\partial_t f = \Delta f - \frac{\Hess f (\nabla f , \nabla f)}{1 + |\nabla f|^2} = \sqrt{1+ |\nabla f|^2} \diverg ( \frac{\nabla f}{\sqrt{1+ |\nabla f|^2}} ) .
\end{equation}

Conversely, if $f$ satisfies this equation, then it corresponds to hypersurfaces moving by mean curvature.

 \end{lemme}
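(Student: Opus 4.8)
The plan is to reduce the statement to a direct computation of the geometric quantities of the graph $X(x,t) = (x, f(x,t))$ over $\Omega$, and then to verify that the flow equation (\ref{flot}) projects onto the stated scalar PDE. First I would record the first-order data of the graph. Writing $x = (x_1,\dots,x_n)$, the coordinate tangent fields are $\partial_i X = (e_i, \partial_i f)$, so the induced metric is $g_{ij} = \delta_{ij} + \partial_i f\,\partial_j f$ with inverse $g^{ij} = \delta_{ij} - \frac{\partial_i f\,\partial_j f}{1+|\nabla f|^2}$, and the upward unit normal is $\nu = \frac{(-\nabla f,\, 1)}{\sqrt{1+|\nabla f|^2}}$. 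A second differentiation gives $\partial_i\partial_j X = (0, \partial_i\partial_j f)$, so that $\langle \nu, \partial_i\partial_j X\rangle = \frac{\Hess f_{ij}}{\sqrt{1+|\nabla f|^2}}$.

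Next I would compute the mean curvature directly from the definition $H = \diverg(\nu)$. Differentiating the orthogonality relation $\langle \nu, \partial_j X\rangle = 0$ yields $\langle \partial_i \nu, \partial_j X\rangle = -\langle \nu, \partial_i\partial_j X\rangle$, so $H = g^{ij}\langle \partial_i \nu, \partial_j X\rangle = -g^{ij}\langle \nu, \partial_i\partial_j X\rangle$. Contracting with the inverse metric, the $\delta_{ij}$ part produces $\Delta f$ and the rank-one part $-\frac{\partial_i f\,\partial_j f}{1+|\nabla f|^2}$ produces the Hessian cross term, giving $H = -\frac{1}{\sqrt{1+|\nabla f|^2}}\bigl(\Delta f - \frac{\Hess f(\nabla f,\nabla f)}{1+|\nabla f|^2}\bigr)$. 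To extract the scalar evolution, I use that the parametrization moves only vertically, $\partial_t X = (0, \partial_t f)$, whence $\langle \partial_t X, \nu\rangle = \frac{\partial_t f}{\sqrt{1+|\nabla f|^2}}$; inserting this and the expression for $H$ into (\ref{flot}), i.e. $\langle \partial_t X, \nu\rangle = -H$, gives the first equality $\partial_t f = \Delta f - \frac{\Hess f(\nabla f,\nabla f)}{1+|\nabla f|^2}$. The second equality is then a pure calculus identity: applying the product rule to $\diverg\bigl(\frac{\nabla f}{\sqrt{1+|\nabla f|^2}}\bigr)$ gives $\frac{\Delta f}{\sqrt{1+|\nabla f|^2}} - \frac{\Hess f(\nabla f,\nabla f)}{(1+|\nabla f|^2)^{3/2}}$, and multiplying by $\sqrt{1+|\nabla f|^2}$ recovers the middle term. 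For the converse, I would run the same computation backwards: a solution $f$ of (\ref{graphe}) produces hypersurfaces $X(\cdot,t)$ whose normal velocity equals $-H$, which is precisely (\ref{flot}).

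I expect the main obstacle to be bookkeeping rather than anything conceptual: one must keep the orientation of $\nu$ and the sign in $H = \diverg(\nu)$ coherent with the paper's normalizations (outer normal, $H = \diverg \nu$, and $(\partial_t x)^{\bot} = -H\nu$) so that the final equation comes out with the correct heat-type sign. The one genuinely geometric point worth flagging is that mean curvature flow is a flow of the image hypersurface, so the purely vertical parametrization used here differs from the ``true'' flow by a tangential reparametrization; since tangential motions leave the image unchanged, they affect neither side of the geometric equation, and the scalar PDE for the vertical graph is therefore equivalent to the hypersurfaces moving by mean curvature.
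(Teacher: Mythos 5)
Your proof is correct: the graph quantities, the formula $H = -\bigl(1+|\nabla f|^2\bigr)^{-1/2}\,g^{ij}\partial_i\partial_j f$ for the chosen unit normal, the projection $\langle \partial_t X, \nu\rangle = -H$, and the divergence-form identity all check out, and your closing remark that the vertical parametrization differs from the flow only by a tangential motion (to which equation (\ref{flot}) is insensitive, since it constrains only the normal component) is exactly the right point for the converse. Note that the paper itself gives no proof of this lemma --- it is quoted from Mantegazza's book --- and your computation is the standard derivation found there, with signs coherent with the paper's conventions ($H = \diverg(\nu)$ and velocity $-H\nu$, a combination invariant under flipping $\nu$, so your use of the upward normal in place of an ``outer'' one is harmless).
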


Next we give the maximum principle for self-shrinkers:

\begin{lemme} \label{principe du max}
Let $f,g : \Omega \times [0,T) \subset{\mathbb{R}^n \times \mathbb{R}} \to \mathbb{R}$ be two functions satisfying (\ref{graphe}), where $\Omega$ is an open subset of $\mathbb{R}^{n}$. Suppose that there exists a compact set $K \subset \Omega$ such that for all $t' \in [0,T)$, the minimum of $(f-g)(.,t')$ is attained at least at one point of $K$. 

Define $u(t) = \min_{p \in \Omega} (f-g)(p,t)$. Then $u$ is a locally Lipschitz function, hence differentiable almost everywhere and if it exists, the differential is nonnegative.

\end{lemme}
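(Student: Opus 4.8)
The plan is to establish the two assertions in turn, writing $w := f - g$, which is smooth on $\Omega \times [0,T)$ since $f$ and $g$ are. The comparison of two \emph{graphs} (rather than of the distance between two hypersurfaces) is what makes the argument available: it reduces everything to a single scalar function $w$ whose minimum we track.

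For the local Lipschitz regularity, fix $T' < T$. The hypothesis that the minimum of $w(\cdot,t)$ is attained at some $p_t \in K$ for every $t$ guarantees in particular that $u(t) = \min_p w(p,t)$ is finite. For $t, s \in [0,T']$, comparing the value at the minimizer $p_t$ with the minimum at time $s$ gives $u(s) \leq w(p_t, s)$, hence $u(s) - u(t) \leq w(p_t, s) - w(p_t, t)$. Since $w$ is smooth and $K \times [0,T']$ is compact, the constant $L := \sup_{K \times [0,T']} |\partial_t w|$ is finite, and the fundamental theorem of calculus yields $u(s) - u(t) \leq L |s - t|$. Exchanging the roles of $s$ and $t$ gives the reverse bound, so $|u(s) - u(t)| \leq L|s-t|$; thus $u$ is locally Lipschitz, and differentiability almost everywhere follows from Rademacher's theorem. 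Note that the fixed compact set $K$ is essential here: it prevents the minimizer from escaping to infinity and is precisely what lets us bound $\partial_t w$ along the comparison points.

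For the sign of the derivative, I would argue at a time $t_0$ of differentiability. Choose any $p_0 \in K$ realizing the minimum; then $t \mapsto w(p_0, t) - u(t)$ is nonnegative and vanishes at $t_0$, so $t_0$ is one of its minima, and differentiating at $t_0$ gives $u'(t_0) = \partial_t w(p_0, t_0)$. It remains to show this time derivative is nonnegative. At the spatial minimum $p_0$ one has the first-order condition $\nabla f(p_0,t_0) = \nabla g(p_0,t_0) =: V$ and the second-order condition that $A := \Hess w(p_0,t_0)$ is positive semidefinite. Subtracting the two instances of equation (\ref{graphe}) and using $\nabla f = \nabla g = V$ to combine the nonlinear terms, I obtain
\begin{equation*}
\partial_t w(p_0,t_0) = \Delta w - \frac{\Hess w(V,V)}{1+|V|^2} = \operatorname{tr}(A) - \frac{A(V,V)}{1+|V|^2}.
\end{equation*}

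The main point, and where I expect the real work to lie, is the final algebraic inequality for the positive semidefinite matrix $A$. The linearized term $\Delta w = \operatorname{tr}(A)$ is nonnegative at a minimum, but the quasilinear correction $-A(V,V)/(1+|V|^2)$ has the opposite sign, so the two contributions must be weighed against each other. Using that $A$ is positive semidefinite, $A(V,V) \leq \|A\|_{\mathrm{op}}\,|V|^2 \leq \operatorname{tr}(A)\,|V|^2$, whence
\begin{equation*}
\partial_t w(p_0,t_0) \geq \operatorname{tr}(A)\Bigl(1 - \frac{|V|^2}{1+|V|^2}\Bigr) = \frac{\operatorname{tr}(A)}{1+|V|^2} \geq 0,
\end{equation*}
which closes the argument. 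The crux is thus that the common gradient $V$ forces the bad correction to be dominated by the trace, while the strict gap $|V|^2 < 1+|V|^2$ leaves a nonnegative remainder; this cancellation is exactly the structural feature that comparing two graphs with coinciding gradients makes possible.
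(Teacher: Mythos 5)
Your proof is correct and follows essentially the same route as the paper: the paper invokes Hamilton's trick for the Lipschitz regularity and the identity $u'(t)=\partial_t(f-g)(p,t)$ at a minimizer, then concludes from $\nabla f=\nabla g$, $\Hess(f-g)\geq 0$ and the fact that $\nabla f/\sqrt{1+|\nabla f|^2}$ has norm less than $1$ --- which is exactly your inequality $A(V,V)\leq \operatorname{tr}(A)\,|V|^2$ written with $\xi=V/\sqrt{1+|V|^2}$, $|\xi|<1$. The only difference is that you prove Hamilton's trick from scratch (correctly, using the compact set $K$ to bound $\partial_t(f-g)$ along minimizers) and spell out the positive-semidefinite trace estimate, where the paper cites the trick and states the final algebra in one line.
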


\begin{proof}
By Hamilton's trick (\cite{Mantegazza} p.26), $u$ is a locally Lipschitz function, hence differentiable almost everywhere and where it makes sense:
$$\frac{du(t)}{dt} = \frac{\partial (f-g)(p,t)}{\partial t},$$
$p\in K$ being a point where the minimum of $(f-g)$ is attained. But at such a point $p$, $\nabla f = \nabla g$, the Hessian of $f-g$ is nonnegative and $\frac{\nabla f}{\sqrt{1+|\nabla f|^2}}$ is a vector whose Euclidian norm is less than $1$, hence the lemma.

\end{proof}

To show that a hypersurface $\mathcal{S}$ is not a self-shrinker, one can thus try to apply this maximum principle to two graphs given by two parts of $\mathcal{S}$. Next we give a proposition implementing this strategy and which will be used in various forms throughout this note:

\begin{prop} \label{PMsimple}
Let $\mathcal{S}_1$ and $\mathcal{S}_2$ two disjoint complete hypersurfaces of $\mathbb{R}^{n+1}$, which can be noncompact and have boundaries. Define the function $h : \mathcal{S}_1 \to \mathbb{R} \cup \{ \infty \}$:
\begin{align*} 
h(a) = &\min\{ a_{n+1} -  b_{n+1} ; (a_1, ... , a_{n}, b_{n+1}) \in \mathcal{S}_2 \text{ and } b_{n+1}<a_{n+1}\}
\\ & \text{ if } \{b_{n+1} ; b_{n+1}<a_{n+1} \text{ and }(a_1, ... , a_{n}, b_{n+1}) \in \mathcal{S}_2 \} \neq \varnothing \\
h(a) = & \infty  \text{ otherwise}.
\end{align*}

If $h$ achieves a local finite minimum at $a \in \mathcal{S}_1$ and $b \in \mathcal{S}_2$, two points not in the respective boundaries, and if $\langle \nu_a , e_{n+1} \rangle \neq 0$ where $\nu$ is the outward normal vector of $\mathcal{S}$, then $\mathcal{S}_1$ or $\mathcal{S}_2$ is not a part of a self-shrinker.
\end{prop}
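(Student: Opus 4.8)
The plan is to argue by contradiction: assuming that both $\mathcal{S}_1$ and $\mathcal{S}_2$ are (parts of) self-shrinkers, I would evolve them by their associated self-similar mean curvature flows $\mathcal{S}_i(t)=\sqrt{-t}\,\mathcal{S}_i$, $t<0$, and derive a contradiction with the monotonicity furnished by Lemma \ref{principe du max}. The whole point is that the hypothesis produces, after a vertical translation, two hypersurfaces touching at a single point from one side, which forces the two local graphs to share the same $1$-jet at the contact point; this is exactly the configuration the maximum principle controls.

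First I would set up the local graphs. Since $\langle\nu_a,e_{n+1}\rangle\neq0$ and $a$ is an interior point, $\mathcal{S}_1$ is, near $a$, the graph of a smooth function $f$ over a neighborhood $U$ of $a'=(a_1,\dots,a_n)$, with $f(a')=a_{n+1}$. Reading $h$ as a function of the horizontal variable through this graph, the local minimality of $h$ says precisely that the translated surface $\mathcal{S}_2+h(a)\,e_{n+1}$ passes through $a$ and lies, near $a$, in the closed region $\{x_{n+1}\le f(x')\}$ below the graph of $f$. I would then invoke the elementary one-sided contact principle: a smooth hypersurface lying on one side of a smooth graph and touching it at an interior point must be tangent to it there (any tangent vector $v$ of $\mathcal{S}_2+h(a)\,e_{n+1}$ at $a$ keeps $f(x')-x_{n+1}\ge0$ stationary, hence $\langle v,\nu_a\rangle=0$). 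Consequently the two tangent planes coincide, so $\langle\nu_b,e_{n+1}\rangle\neq0$ as well, $\mathcal{S}_2$ is also a graph $g$ near $b$ over $U$ (shrinking $U$ if needed), and $f-g$ attains the local minimum $h(a)$ at $a'$ with $\nabla f(a')=\nabla g(a')$. Note also that $h(a)=a_{n+1}-b_{n+1}>0$ strictly, since $\mathcal{S}_1$ and $\mathcal{S}_2$ are disjoint.

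Next I would exploit self-similarity. Under the flows $\mathcal{S}_i(t)=\sqrt{-t}\,\mathcal{S}_i$ the two pieces remain graphs near the scaled contact point and their graph functions $f(\cdot,t),g(\cdot,t)$ satisfy (\ref{graphe}), since self-similarly shrinking hypersurfaces move by mean curvature. Because the whole configuration is dilated by $\sqrt{-t}$ (normalizing the base time to $t=-1$, where the gap equals $h(a)$), the local minimal vertical gap scales accordingly,
$$u(t)=\min_{x'}(f-g)(x',t)=\sqrt{-t}\,h(a),$$
attained near the moving interior point $\sqrt{-t}\,a'$, so that $u'(t)=-h(a)/(2\sqrt{-t})<0$. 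On the other hand, after restricting to a fixed small spatial neighborhood on which the minimum of $(f-g)(\cdot,t)$ stays at an interior point for a short time interval (and relabeling time to run in $[0,T)$), Lemma \ref{principe du max} applies and gives $u'\ge0$ wherever $u$ is differentiable. This contradiction shows that $\mathcal{S}_1$ or $\mathcal{S}_2$ cannot be part of a self-shrinker.

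The two delicate points are the tangency and the localization. I expect the main obstacle to be the localization step for Lemma \ref{principe du max}: one must ensure that, on a fixed neighborhood of $a'$, the minimum of $(f-g)(\cdot,t)$ continues to be achieved at an interior point for $t$ slightly beyond the base time, so that the compactness hypothesis of the lemma is met. Since the minimizer travels continuously as $\sqrt{-t}\,a'$ and stays interior, this holds after shrinking the neighborhood (choosing its radius so that $f-g$ strictly exceeds $h(a)$ on the boundary, with a small perturbation when the local minimum fails to be strict); the tangency argument above is what guarantees, in the first place, that the two pieces can be compared as graphs over a common domain.
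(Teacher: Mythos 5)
Your argument is correct and is essentially the paper's own proof: both assume $\mathcal{S}_1$ and $\mathcal{S}_2$ are self-shrinkers, write them locally as graphs $f,g$ satisfying (\ref{graphe}), and play the monotonicity $u'\ge 0$ from Lemma \ref{principe du max} against the self-similar scaling $u(t)=\sqrt{-t}\,h(a)$, which forces $u'<0$ since $h(a)>0$ by disjointness. The paper compresses this into three lines; your tangency and localization steps are precisely the details it leaves implicit, and note that in the non-strict case no perturbation is actually needed, since local minimality of $h$ gives $f-g\ge h(a)$ on a full neighborhood of $a'$, so the scaling identity pins the minimum of $(f-g)(\cdot,t)$ exactly at the interior point $\sqrt{-t}\,a'$, which supplies the compact set required by the lemma.
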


\begin{proof}
Suppose that the two hypersurfaces are self-shrinkers. Under these hypotheses, we can locally write $\mathcal{S}_1$ and $\mathcal{S}_2$ as graphs of $f$ and $g$ which satisfy the hypotheses of lemma \ref{principe du max}. If $u(t) = \min (f-g)(p,t)$ then $\frac{d u_{\min}(t)}{dt} \geq0$ almost everywhere. On the other hand, by the definition of self-shrinker, the solution $\mathcal{S}_i(t)$ ($i=1,2$) of the mean curvature flow with $\mathcal{S}_i(0)=\mathcal{S}_i$ is given by $\mathcal{S}_i(t) = \sqrt{-t} \mathcal{S}_i$ $(-1 \leq t \leq0)$, which would mean $\frac{d u_{\min}(t)}{dt} < 0$. This is the desired contradiction.
\end{proof}

Next, we give a specialized form of this proposition, but before let's define hypersurfaces with $\delta$-separated ends.
 
 \begin{definition} \label{separate}
Let $\delta>0$. $\mathcal{S} \subset \mathbb{R}^{n+1}$ is a properly immersed hypersurface. 
Suppose that $\mathcal{S}_1$ and $\mathcal{S}_2$ are two disjoint open connected subsets of $\mathcal{S}$ with $\overline{\mathcal{S}_1} \cup \overline{\mathcal{S}_2} = \mathcal{S}$ $(\overline{\mathcal{S}_i}$ being the closure of ${\mathcal{S}_i}$ in $\mathcal{S}$) and $\partial \overline{\mathcal{S}_1} = \partial\overline{\mathcal{S}_2}$. Suppose that this boundary $\partial\overline{\mathcal{S}_1}$ is bounded. 

If for every such partition $\overline{\mathcal{S}_1} \cup \overline{\mathcal{S}_2} = \mathcal{S}$, for $r$ sufficiently large, 
one of the $(\frac{1}{r}  \mathcal{S}_i)  \backslash B(0,1)$ is empty or
$$\inf \{ ||x_1-x_2|| ; x_i \in (\frac{1}{r}  \mathcal{S}_i)  \backslash B(0,1)  \} \geq \delta$$
then $\mathcal{S}$ is said to have $\delta$-separated ends.
\end{definition}
 
\begin{coro} \label{comparaison}
Let $\delta >0$. Let $\mathcal{S} \subset \mathbb{R}^{n+1}$ be a complete properly immersed hypersurface with $\delta$-separated ends.

Suppose that there exists an embedding $\alpha : \mathbb{S}^{n-1} \to \mathcal{S}$ such that the two disjoint connected components of $\mathcal{S} \setminus \alpha(\mathbb{S}^{n-1})$ are called $\mathcal{S}^+$ and $\mathcal{S}^-$, and a set $A\subset \mathbb{S}^n$ of non zero $n$-dimensional Lebesgue measure with the following properties: if $\tilde{e}=(e_1, ... ,e_{n+1})$ is an orthonormal base of $\mathbb{R}^{n+1}$ with $e_{n+1} \in A$ then in the coordinates determined by $\tilde{e}$

\begin{enumerate}
\item there exists $x = (x_1, ... , x_n, x_{n+1}) \in \mathcal{S}^-$ and $y = (x_1, ... , x_n, y_{n+1}) \in \mathcal{S}^+$ such that $y_{n+1} < x_{n+1}$,

\item there exists a neighborhood $\mathcal{V}$ of $\alpha(\mathbb{S}^{n-1})$ in the closure of $\mathcal{S}^-$ verifying that if $ a = (z_1, ... , z_n, a_{n+1}) \in \mathcal{V}$ then there is a point $ u=(z_1, ... , z_n, u_{n+1}) \in \mathcal{S}^-$ with $a_{n+1} > u_{n+1}$ such that there is no $v = (z_1, ... , z_n, v_{n+1}) \in \mathcal{S}^+$ with $a_{n+1} > v_{n+1} > u_{n+1}$,

\item if $ z = (z_1, ... , z_{n+1}) \in \alpha(\mathbb{S}^{n-1})$ and $ b = (z_1, ... , z_n, b_{n+1}) \in \mathcal{S}^-$ satisfy $b_{n+1} > z_{n+1}$, then there is a  $ w=(z_1, ... , z_n, w_{n+1}) \in \mathcal{S}^+$ such that $b_{n+1} >w_{n+1} > z_{n+1}$.
\end{enumerate}
Then $\mathcal{S}$ is not a self-shrinker.

\end{coro}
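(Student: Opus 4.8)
The plan is to apply Proposition \ref{PMsimple} with $\mathcal{S}_1 = \mathcal{S}^-$ (the ``upper'' sheet) and $\mathcal{S}_2 = \mathcal{S}^+$ (the ``lower'' sheet), arguing by contradiction: assuming $\mathcal{S}$ is a self-shrinker, I will exhibit a direction $e_{n+1}\in A$ for which the gap function $h$ of Proposition \ref{PMsimple} attains a local finite minimum at a pair of \emph{interior} points $a\in\mathcal{S}^-$, $b\in\mathcal{S}^+$ with $\langle\nu_a,e_{n+1}\rangle\neq 0$, contradicting the conclusion of that proposition. Fixing first any $e_{n+1}\in A$ and the associated coordinates, condition (1) furnishes a point $x\in\mathcal{S}^-$ lying strictly above a point of $\mathcal{S}^+$ with the same horizontal coordinates, so $h(x)$ is finite; and $h>0$ wherever finite because $\mathcal{S}^-$ and $\mathcal{S}^+$ are disjoint. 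Set $m=\inf_{\mathcal{S}^-}h\in(0,\infty)$, and let the rest of the argument verify that this infimum is realized well inside both sheets.

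The first point is that $m$ is \emph{attained at a finite point}, and here the $\delta$-separated ends of Definition \ref{separate} enter. The crucial observation is that if $a$ and its realizing lower point $b$ share all horizontal coordinates, then $h(a)=a_{n+1}-b_{n+1}=|a-b|\geq \mathrm{dist}(a,\mathcal{S}^+)$. If a minimizing sequence $a_k$ escaped to infinity, applying the separation property with $r=|a_k|$ would give $\mathrm{dist}(a_k,\mathcal{S}^+)\geq\delta|a_k|\to\infty$, forcing $h(a_k)\to\infty$ and contradicting $h(a_k)\to m<\infty$. Hence a minimizing sequence stays in a bounded region; by properness of the embedding and lower semicontinuity of $h$ (a consequence of $\mathcal{S}^+$ being closed), the infimum is attained at some $a\in\overline{\mathcal{S}^-}$ with a realizing point $b\in\overline{\mathcal{S}^+}$.

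It remains to show that $a$ and $b$ are interior, and this is where conditions (2) and (3) do their work; I expect this to be the main obstacle. For $a$: if $a$ lay in the neighborhood $\mathcal{V}$ of $\alpha(\mathbb{S}^{n-1})$ (in particular if $a\in\alpha(\mathbb{S}^{n-1})$), condition (2) provides $u\in\mathcal{S}^-$ strictly below $a$ with no point of $\mathcal{S}^+$ vertically between $u$ and $a$; consequently the $\mathcal{S}^+$-point realizing $h(a)$ lies below $u$ and also realizes $h(u)$, so $h(u)=u_{n+1}-b_{n+1}<a_{n+1}-b_{n+1}=h(a)$. Thus no near-boundary point can minimize $h$, and $a$ lies in the open sheet $\mathcal{S}^-$. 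For $b$: were $b\in\alpha(\mathbb{S}^{n-1})$, condition (3) applied to $z=b$ and the point $a\in\mathcal{S}^-$ above it would yield $w\in\mathcal{S}^+$ with $a_{n+1}>w_{n+1}>b_{n+1}$, whence $h(a)\leq a_{n+1}-w_{n+1}<a_{n+1}-b_{n+1}=m$, a contradiction; so $b\in\mathcal{S}^+$ as well.

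Finally, the nonvertical-normal condition $\langle\nu_a,e_{n+1}\rangle\neq 0$ is arranged using that $A$ has positive $n$-dimensional measure: the directions $e_{n+1}\in A$ for which the tangent plane at the resulting minimizer is vertical form a set one expects to be negligible (a Sard-type genericity argument), so some admissible direction in $A$ produces a minimizing pair at which $\mathcal{S}^-$ and $\mathcal{S}^+$ are genuine graphs. With such an $e_{n+1}$ fixed, $h$ attains a local finite minimum at the interior points $a\in\mathcal{S}^-$ and $b\in\mathcal{S}^+$ with $\langle\nu_a,e_{n+1}\rangle\neq 0$, so Proposition \ref{PMsimple} forces $\mathcal{S}^-$ or $\mathcal{S}^+$, and therefore $\mathcal{S}$, not to be a self-shrinker. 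The delicate points to check are the lower semicontinuity and attainment in the second step, and especially the bookkeeping of ``no $\mathcal{S}^+$ in between'' underlying conditions (2)--(3); the genericity of the direction is the other place where some care, and the positive measure of $A$, is genuinely used.
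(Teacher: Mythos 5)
Your proposal is correct and follows essentially the same route as the paper: the same vertical-gap function $h$, attainment of its finite minimum via the $\delta$-separation hypothesis and properness, exclusion of (near-)boundary minimizers by conditions (2) and (3), and a Sard-type genericity argument over $e_{n+1}\in A$ (in the paper, the bad directions are the critical values of $(a,b)\in\mathcal{S}^-\times\mathcal{S}^+\mapsto (a-b)/\|a-b\|$) to ensure $\langle \nu_a , e_{n+1}\rangle \neq 0$ before invoking Proposition \ref{PMsimple}. The only cosmetic difference is bookkeeping: the paper extends $h$ to $\alpha(\mathbb{S}^{n-1})$ by a $\liminf$ and lets $b$ range over $\mathcal{S}^+\cup\alpha(\mathbb{S}^{n-1})$, whereas you work with closures, which comes to the same thing.
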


\begin{remarque}
This corollary covers the case where $\mathcal{S}$ is compact (see Figure \ref{configurations}) because such a surface automatically has $\delta$-separated ends. The existence of $A$ will always be easy to check in our applications. Besides, although we will only need the situation of the corollary, where the common boundary is given by an $(n-1)$-sphere $\alpha(\mathbb{S}^{n-1})$, one can actually consider more general boundaries.
\end{remarque}

\begin{proof} 
To prove this corollary, we find two parts of $\mathcal{S}$, the first one in $\mathcal{S}^+$, the other one in $\mathcal{S}^-$, to which is applied the Maximum Principle. Then the conclusion will follow by Proposition  \ref{PMsimple}.
 
 Suppose that $\mathcal{S}$ is a self-shrinker.
 As in Proposition \ref{PMsimple}, define $h : \mathcal{S}^-\cup \alpha(\mathbb{S}^{n-1})  \to \mathbb{R\cup \{ \infty \}}$ by 

 \begin{align*} 
h(a) = &\min\{ a_{n+1} -  b_{n+1} ; (a_1, ... , a_{n}, b_{n+1}) \in \mathcal{S}^+ \cup \alpha(\mathbb{S}^{n-1}) \text{ and } b_{n+1}<a_{n+1}\}
\\ & \text{ if } a \in \mathcal{S}^-  \mbox{ and if such a $b$ exists, } \\
h(a) = & \infty  \text{ if }  a \in \mathcal{S}^- \text{ and if such a $b$ doesn't exist, } \\
h(a) = &  \lim\inf_{n \to \infty} \{h(x) ; x \in \mathcal{S}^- \text{ and } |x-a|<\frac{1}{n} \} \\  & \text{ if } a \in  \alpha(\mathbb{S}^{n-1}).
\end{align*}
 
By Proposition \ref{PMsimple}, we just need to show that $h$ attains a finite minimum at $a \in \mathcal{S}^-$ and $b \in \mathcal{S}^+$ with $\langle \nu_a , e_{n+1} \rangle \neq 0$.
 
This function is not constantly $\infty$ by the first condition and is lower semicontinuous: it attains its finite minimum on the set $\mathcal{S}^-\cup \alpha(\mathbb{S}^{n-1})$ because of the $\delta$-separation hypothesis. In fact, it can't be achieved in $\alpha(\mathbb{S}^{n-1})$ because of the second condition. So we can find $a=(a_1, ... , a_{n+1}) \in \mathcal{S}^-$ and $b=(b_1, ... , b_{n+1}) \in \mathcal{S}$ such that $a_{n+1} - b_{n+1} = \min h$. The third condition gives $b \in \mathcal{S}^+$. Moreover $\nu_a = \nu_b$. Then, the Sard Lemma and the fact that the three conditions are true for $e_{n+1} \in A$ imply that we can suppose $\langle \nu_a , e_{n+1} \rangle \neq 0$: indeed the set of vectors $e\in \mathbb{S}^n$ for which there exists $a \in \mathcal{S}^-$ and $b \in \mathcal{S}^+$ with $\nu_a = \nu_b$ and $\langle \nu_a , e_{n+1} \rangle = 0$ is of Lebesgue measure zero (they are critical values of $(a,b) \in \mathcal{S}^- \times \mathcal{S}^+ \mapsto (a-b)/||a-b||$). The conclusion follows from Proposition \ref{PMsimple}.

\end{proof}

\begin{figure}
\includegraphics[trim = 0mm 110mm 0mm 0mm, clip, width=12cm]{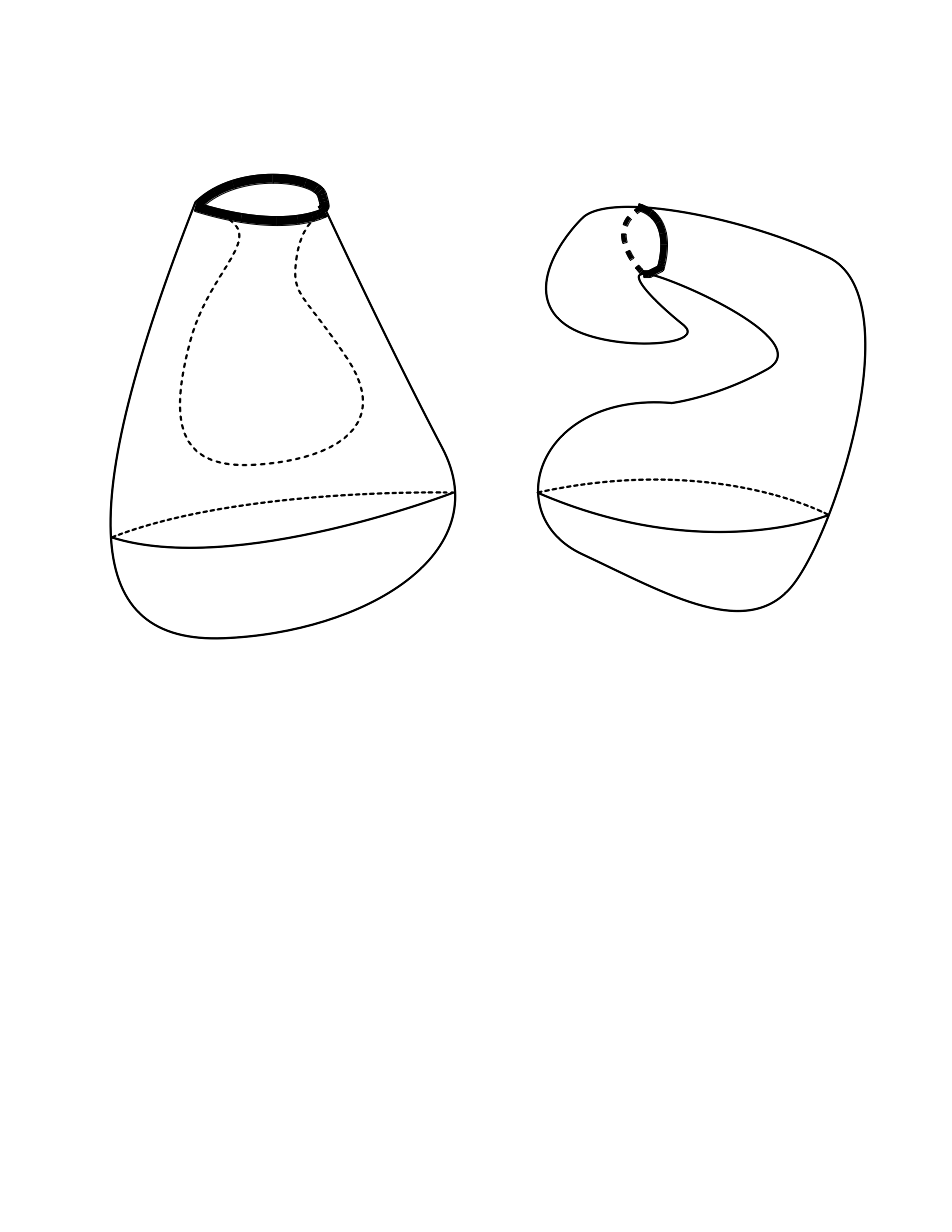}
\caption{Some typical configurations where Corollary \ref{comparaison} applies, but where the usual maximum principle is inefficient. Embeddings of $\mathbb{S}^{n-1}$ are represented in bold line.}

\label{configurations}
\end{figure}

 \section{Some applications}

By "maximum principle for self-shrinkers", one usually means that the distance between two hypersurfaces moving by mean curvature is non-decreasing. Here, Corollary \ref{comparaison} gives a maximum principle for graphs and one has to choose the axis $\mathbb{R}e_{n+1}$: this non canonical choice enables more flexibility, as we will see with the following paragraphs.

\subsection{Rotationally symmetric compact self-shrinkers}

Let $u$ be a vector of $\mathbb{R}^{n+1}$, $\mathbb{S}^{n-1}$ is identified with the unit sphere of the hyperplane orthogonal to $u$. Consider a simple curve $\gamma : [a,b] \to \mathbb{R}\times \mathbb{R}^+$. Let $\Sigma_\gamma$ be the image of an embedding $\varphi : \mathbb{S}^{n-1} \times  [0,1] \to \mathbb{R}^{n+1}$ which can be written as $\varphi(\omega, s) = x(s) u + r(s) \omega $, where $s \mapsto (x(s),r(s))$ is a parametrization of $\gamma$. $\Sigma_\gamma$ is then said to be the rotationally symmetric hypersurface generated by $\gamma$: it is obtained by rotating $\gamma$ around the axis $\mathbb{R}u$. Here is a result proved in \cite{KleeneMoller} (see also \cite{Drugan}).

\begin{theo} [\cite{KleeneMoller}]
If $\Sigma^n \subset \mathbb{R}^{n+1}$ is an embedded compact rotationally symmetric self-shrinker, then $\Sigma^{n}$ is:
\begin{enumerate}
\item either the sphere $\mathbb{S}^n$ of radius $\sqrt{2n}$ centered at the origin,
\item or an embedded $\mathbb{S}^1 \times \mathbb{S}^{n-1}$.
\end{enumerate}
\end{theo}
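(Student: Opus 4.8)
The plan is to reduce everything to the generating curve and to feed a suitable configuration into Corollary \ref{comparaison}. First I would pin down the axis: equation (\ref{self}) is preserved only by isometries of $\mathbb{R}^{n+1}$ fixing $0$, so if $\Sigma$ is invariant under all rotations about a line $\mathbb{R}u + p_0$, then applying such a rotation $\rho(z)=p_0+R(z-p_0)$ to (\ref{self}) and comparing forces $\langle p_0,R\nu\rangle=\langle p_0,\nu\rangle$ for every $R\in SO(n+1)$ fixing $u$, whence $p_0\in\mathbb{R}u$. Thus the symmetry axis passes through the origin, and I may take it to be $\mathbb{R}e_1$ with $0$ on it. I then work with the profile curve $\gamma=(x,r)$ in the half--plane $\{r\ge 0\}$, noting the standard structural dichotomy for a compact embedded surface of revolution: either $\gamma$ is a simple closed curve contained in $\{r>0\}$, or $\gamma$ is a simple arc whose interior lies in $\{r>0\}$ and whose two endpoints meet $\{r=0\}$ orthogonally.

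In the first case the rotation of a circle around the axis gives exactly $\Sigma\cong\mathbb{S}^1\times\mathbb{S}^{n-1}$, which is conclusion $(2)$; there is nothing further to prove since the theorem only asserts the topological type here. So the real work is the second case, where $\gamma$ meets the axis orthogonally at two poles $P_\pm=(p_\pm,0)$ and $\Sigma\cong\mathbb{S}^n$. Here I argue by contradiction: assuming $\Sigma$ is a self-shrinker but \emph{not} the round sphere, I want to exhibit the nested configuration of Corollary \ref{comparaison}. The idea is to orient the comparison roughly along the axis, taking $e_{n+1}$ in a small spherical cap $A\subset\mathbb{S}^n$ about $e_1$ (which has positive $n$-dimensional measure, fulfilling the hypothesis of the corollary), and to take the separating embedding $\alpha(\mathbb{S}^{n-1})$ to be a latitude sphere, which splits $\Sigma$ into its two caps $\mathcal{S}^+$ and $\mathcal{S}^-$. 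If $\Sigma$ deviates from the round sphere --- either because $\gamma$ is not graphical over the axis (an overhang) or because it is graphical but not convex and symmetric about $\{x_1=0\}$ --- then over a common footprint a point of $\mathcal{S}^-$ sits above a point of $\mathcal{S}^+$ while the two caps are nested immediately across the chosen latitude, which is precisely hypotheses $(1)$--$(3)$. Corollary \ref{comparaison} then rules $\Sigma$ out. Consequently $\gamma$ must be a convex graph $r=\rho(x)$, symmetric about $\{x_1=0\}$, meeting the axis orthogonally at $\pm p$.

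With graphicality, convexity and symmetry in hand, I would finish through the profile ODE obtained by inserting $r=\rho(x)$ into (\ref{self}): the unique symmetric convex solution closing up orthogonally at both poles is the round semicircle, and evaluating (\ref{self}) on $\mathbb{S}^n(R)$, where $\langle x,\nu\rangle=R$ and $H=n/R$, gives $n/R=R/2$, i.e. $R=\sqrt{2n}$, yielding conclusion $(1)$. The main obstacle is the third step: turning the qualitative statement ``$\Sigma$ is not round'' into the exact vertical nesting required by Corollary \ref{comparaison}, namely choosing the direction $e_{n+1}\in A$ and the latitude $\alpha(\mathbb{S}^{n-1})$ so that conditions $(1)$--$(3)$ genuinely hold. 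A related technical point is the degeneracy at the poles and at the widest latitude, where the normal can satisfy $\langle\nu,e_{n+1}\rangle=0$; this is exactly why $e_{n+1}$ is perturbed off the axis within $A$ and why the measure-zero (Sard) argument already built into the proof of Corollary \ref{comparaison} is needed to guarantee a non-vertical normal at the touching point.
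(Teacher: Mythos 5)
There is a genuine gap, and it sits exactly where you flag your ``main obstacle'': the claim that deviation from roundness yields the configuration of Corollary \ref{comparaison}. That implication is false as stated. If the profile $\gamma$ is a graph $r=\rho(x)$ over the axis with a single interior maximum of $r$ --- but non-convex, asymmetric, or convex-but-not-circular --- then cutting $\Sigma$ along a latitude and taking $e_{n+1}$ near $\pm u$ never produces a point of $\mathcal{S}^-$ lying above a point of $\mathcal{S}^+$ on a common vertical line: for each value of $r$ below the maximum there is exactly one profile point on each side, and the cap $\mathcal{S}^-$ lies entirely on one side in the axial coordinate, so hypothesis $(1)$ of the corollary fails. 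The corollary only bites when $\gamma$ has a genuine overhang, i.e.\ a graph piece over the axis with \emph{downward}-pointing normal and an interior maximum, and extracting such a piece is the hard content of the proof. The paper does not attempt to get it from ``not round''; instead it reduces via Huisken's rigidity theorem \cite{Huisken1} (a compact self-shrinker with $H\geq 0$ is the round sphere) to the case where $H<0$ somewhere, and then proves a technical lemma producing the overhang from $H<0$. That lemma is itself nontrivial: it runs a case analysis on the sign of $\langle v,\nu\rangle$ using touching-plane (usual maximum principle) arguments, and in the delicate case where the two extremal normals point the same way it studies $\tau=\langle\nu,u\rangle$ on the rotational piece, computing via Codazzi and the shrinker equation that $\Delta\tau=\langle X,A u^T\rangle-|A|^2\tau$, so that $\tau<0$ at an interior minimum. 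None of this machinery, or a substitute for it, appears in your proposal.

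Your fallback for the graphical case compounds the problem: the assertion that ``the unique symmetric convex solution closing up orthogonally at both poles is the round semicircle'' is precisely the kind of ODE uniqueness statement that constitutes the substance of Kleene--M{\o}ller's work and cannot be waved through; in the paper this role is played entirely by \cite{Huisken1}, with no ODE analysis needed. (Also, convexity and symmetry of the profile are not consequences of anything you established, since the alleged reduction in the previous step fails.) Two smaller remarks: the paper also disposes of the possibility that the generating set has several components (``the usual maximum principle implies that this union has only one element''), which you should at least note unless connectedness is built into your definition of hypersurface; and your preliminary argument that the axis passes through the origin is fine but peripheral. The correct skeleton is: reduce to showing $H\geq 0$ via Huisken, assume $H<0$ somewhere, prove the overhang lemma, and only then invoke Corollary \ref{comparaison} with $e_{n+1}\approx\pm u$.
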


\begin{proof}
Suppose that $\Sigma^n$ is generated by $\gamma : [a,b] \to \mathbb{R} \times \mathbb{R}^+$. By compactness and embeddedness of $\Sigma$, $\gamma$ is the disjoint union of simple closed curves of $\mathbb{R} \times \mathbb{R}_*^+$ and simple curves whose ends are in $\mathbb{R} \times \{0\}$. The usual maximum principle implies that this union has only one element.

If $\gamma$ is a simple closed curve in $\mathbb{R} \times \mathbb{R}_*^+$ then $\Sigma$ is a torus $\mathbb{S}^1 \times \mathbb{S}^{n-1}$.

Suppose now that $\gamma(0)$ and $\gamma(1)$ are in $\mathbb{R} \times \{0\}$. $\Sigma$ is necessarily diffeomorphic to a sphere and we want to show that it is in fact the sphere of radius $\sqrt{2n}$ centered at the origin. Because of results proven in \cite{Huisken1}, we just have to show that $H\geq 0$ on $\Sigma$. Suppose that this is not the case: $\{s ; H(\gamma(s))<0 \} \neq \varnothing$. $u$ is the horizontal unit vector (identified with $(1,0) \in \mathbb{R} \times \mathbb{R}^+$), let $v$ be the vertical unit vector $(0,1) \in \mathbb{R} \times \mathbb{R}^+$. Write $(x(s),r(s))$ for $(x(\gamma(s)),r(\gamma(s)))$. Denote by $\theta(s)$ the angle between $(x(s),r(s))$ and $\gamma'(s)$ for all the points $\gamma(s)$ different from the origin. The mean curvature $H$ vanishes at $\gamma(s)$ if and only if $\theta(s) =0 [\pi]$ or $x(s)=r(s)=0$ and its sign is given by the sign of $\sin(\theta(s))$. By changing the parametrization, we can suppose that $x(0) > x(1)$. $\Sigma$ being smooth, $\gamma'(0)$ is parallel to $v$ and $\nu(\gamma(0))$ is parallel to $u$.

\begin{lemme} Suppose that $\Sigma$ is a compact self-shrinker but $\{s ; H(\gamma(s))<0 \} \neq \varnothing$. Then, by changing $u$ to $-u$ if necessary, there would be $s_1 < t < s_2 \in]0,1[$ such that the following properties are satisfied:

\begin{enumerate}

\item $\gamma{[s_1,s_2]}$ is the graph of a function $f : [x(s_1),x(s_2)] \to \mathbb{R}$ over the $x$-axis,

\item $\langle v , \nu(\gamma(s)) \rangle < 0 $ for all $s \in [s_1,s_2]$,

\item $f$ attains its maximum at $x(t) \in ]x(s_1),x(s_2][$ and $f(x(s_i))<f(x(t))$ for $i=1,2$.   

\end{enumerate}

\end{lemme}	

\begin{proof}(of the lemma)
With a small abuse of notation, we write $\nu(s)$ for $\nu(\gamma(s))$.
Let $a$ be such that $H(\gamma(a))<0$. By changing $u$ to $-u$ if necessary, we can suppose by continuity that $x(a) < 0$. We can also suppose that $\nu(a) \neq \pm u$. Define the functions $\rho_1(a) = \sup\{ s< a ; \nu(s) = \pm u \}$ and $\rho_2(a)= \inf\{ s> a ; \nu(s) = \pm u \}$. Let's distinguish two cases, depending on the sign of $\langle v , \nu(a) \rangle $.

If $\langle v , \nu(a) \rangle > 0$ then, the fact that $x(a)<0$ and the usual maximum principle imply that $\nu(\rho_2(a)) = u$ (if not, consider a plane touching locally $\Sigma$ between $\gamma(a)$ and $ \gamma(\rho_2(a))$. Once again by this argument, there exists $a'$ greater than $\rho_2(a) $ such that $x(\rho_1(a'))<x(a')$ and $\nu(\rho_1(a'))=u$. Now either $\nu(\rho_2(a'))=-u$ or $\nu(\rho_2(a'))=u$. In the first case, let 
$$s_1=\rho_1(a'), s_2=\rho_2(a')$$ 
and take $t \in ]s_1,s_2[ $ such that $r(t)$ is maximal when $t \in ]s_1,s_2[$: the properties of the lemma are indeed satisfied. In the second case (the two normal vectors have the same direction, see figure \ref{dessinlemme}): consider the rotationally symmetric hypersurface $\mathcal{B}$ generated by $\gamma_{|[\rho_1(a'),\rho_2(a')]} : [\rho_1(a'),\rho_2(a')] \to \mathbb{R} \times \mathbb{R}^+$. Define the function $$\tau :  z \in \mathcal{B} \to \langle \nu_z , u \rangle.$$ On the boundary $\partial \mathcal{B}$, $\tau$ is equal to its greater possible value $1$, so this function attains its minimum inside $\mathcal{B}$. This minimum is strictly less than $1$ as $s \in [\rho_1(a'),\rho_2(a')] \to x(\gamma(s))$ is not constant. Besides, if $A$ denotes the second fundamental form, $|A|^2 \neq 0$ as soon as $\tau \neq 1$ or $-1$ because of the rotational symmetry. Using the self-shrinker equation \ref{self} and the Codazzi equations, we compute in local charts:

\begin{align*}
\nabla \tau = A.u^T
\end{align*}
then
\begin{align*}
\Delta \tau & = g^{ij} \nabla_i\nabla_j \langle \nu , u \rangle \\
& = g^{ij} \nabla_i \langle \nabla_j \nu , u \rangle \\
& = g^{ij} \nabla_i \langle h_{jl} g^{lm} \frac{\partial X}{\partial x^m} , u \rangle \\
& = \langle g^{ij} \nabla_i h_{jl} . g^{lm} \frac{\partial X}{\partial x_m} , u \rangle + \langle h_{jl}.g^{lm} g^{ij} \nabla_i \frac{\partial X }{\partial x^m} , u \rangle \\
& = \langle \nabla H , u \rangle - |A|^2 \tau \\
& = \langle X , A.u^T \rangle - |A|^2 \tau.
\end{align*}

\begin{figure} 
\includegraphics[trim = 0mm 80mm 0mm 0mm, clip, width=8cm]{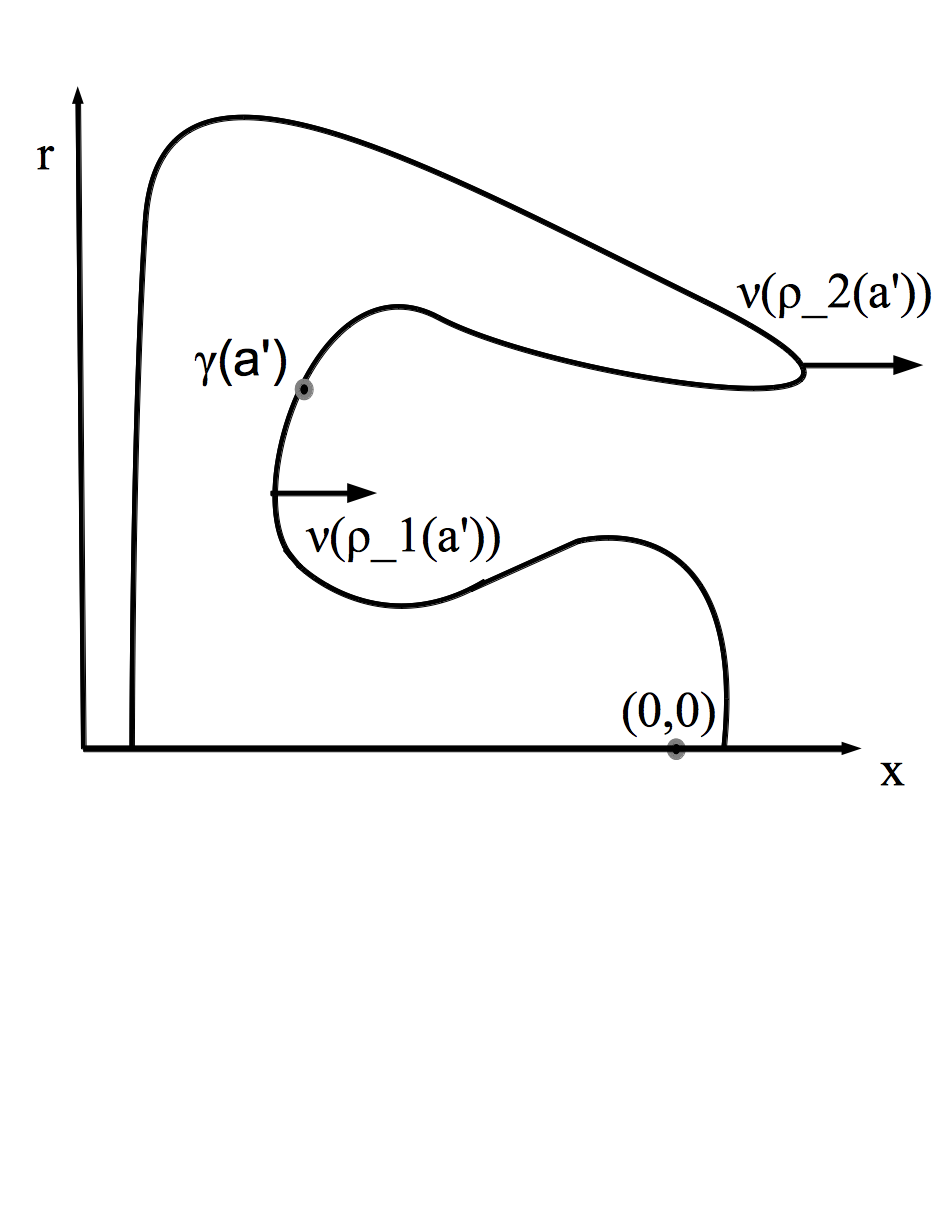}
\caption{Case where $\langle v , \nu(a) \rangle > 0$ and the two considered normal vectors have the same direction.}
\label{dessinlemme}
\end{figure}

The function $\tau$ is thus strictly negative at a point of minimum, which makes it possible to find $s_1<t<s_2$ as in the lemma (recall that $x(\rho_1(a'))<x(a')$).

If $\langle v , \nu(a) \rangle < 0$, then either $\langle u , \nu(a) \rangle >0$, or $\langle u , \nu(a) \rangle < 0$. Consider the first case, the second one being similar. If $\nu(\rho_2(a)) = -u$ then the lemma is verified. Suppose that $\nu(\rho_2(a)) = u$. If $\nu(\rho_1(a)) = u$, by arguing as in the preceding case where the two normal vectors considered have the same direction. Finally, there is the case $\nu(\rho_2(a)) = u$ and $\nu(\rho_1(a)) = -u$: a fortiori, $x(\rho_1(a)) < x(\rho_2(a))$. Remind that $x(0) > x(1)$. The lemma is then verified by considering a neighborhood of the point where 
 $$\min \big \{ \maxÊ\{ r(s) ; s \in [0,\rho_1(a)] \} ; \max \{ r(s) ; s \in [\rho_2(a),1] \} \big \}$$
is achieved.

\end{proof}

To conclude the proof of the theorem, let $s_1$, $s_2$, $a$ be as in the lemma. Of course we can suppose that $r(s_1) = r(s_2)$ and that there is no $s$ between $s_1$ and $s_2$ such that $r(s) = r(s_1)$ and $x(s_1) < x(s) < x(s_2)$. If $\min\{ \maxÊ\{r(s) ; s\in [0,s_1]\} ; \max\{r(s) ; s \in [s_2,1]\}\}$ is attained at $s \in [0,s_1[$ then define $\mathcal{S}^-$ as the hypersurface generated by $\gamma_{| [0,s_1[}$, if not define $\mathcal{S}^-$ as generated by $\gamma_{ | ]s_2,1]}$. Let $\mathcal{S}^+$ be the complement of the closure of $\mathcal{S}^-$ in $\Sigma$. Apply \ref{comparaison} with $e_{n+1} \approx u$ in the first case, $e_{n+1} \approx -u$ in the second case where the sign $\approx \pm u$ means that $e$ is chosen very close to $\pm u$ (essentially $\Sigma$ looks like the left of Figure \ref{configurations}).

\end{proof}

\subsection{Asymptotic behavior and cones}

We now apply Proposition \ref{PMsimple} to get a simple proof of a result of Ilmanen (see \cite{Ilmanen} Lecture $2$, $B$ , remark on p.$8$).

\begin{definition}
Let $K$ be a compact subset of $\mathbb{S}^n$. The set $$\{ rx | r>0 \quad x\in K\} \cup \{0\} \subset \mathbb{R}^{n+1}$$
is called the cone generated by $K$.
\end{definition}

As usual, $S(0,r)$ (resp. $B(0,r)$) denotes the sphere (resp. the ball) of radius $r$ centered at the origin. 

\begin{theo}  [\cite{Ilmanen}]   \label{cone}
Let  $\Sigma^n \subset \mathbb{R}^{n+1}$ be a complete properly immersed self-shrinker. Then there exists a cone $\mathcal{C} \subset \mathbb{R}^{n+1}$ generated by a compact set $K \subset \mathbb{S}^n$ such that:
$$\lambda \Sigma \to \mathcal{C} \text{ as } {\lambda \to 0^+},$$
locally for the Hausdorff metric.

Moreover in the case $n=2$, if the number of connected components of $S(0,r) \cap \Sigma$ is bounded when $r \to \infty$, then the compact $K$ is of $2$-dimensional Lebesgue measure $0$. 
\end{theo}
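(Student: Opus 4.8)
The plan is to reinterpret the blow-down $\lambda\Sigma$ ($\lambda\to0^+$) in a way that makes the self-shrinker equation produce the cone condition almost for free, and then to do the real work in establishing convergence. First I would record the scaling behaviour of the defining equation. For $y=\lambda x\in\lambda\Sigma$ the normal is unchanged, $\nu_y=\nu_x$, while the second fundamental form and hence the mean curvature scale like $1/\lambda$; so the shrinker identity $H=\langle x,\nu\rangle/2$ rescales to $\langle y,\nu_y\rangle = 2\lambda^2 H_y$, i.e. $H_y=\langle y,\nu_y\rangle/(2\lambda^2)$. (Equivalently, writing $\lambda=\sqrt{-t}$, the $\lambda\Sigma$ are exactly the time-slices $M_t=\sqrt{-t}\,\Sigma$ of the self-similar flow as $t\to0^-$.) The point of this formula is that on any region where the rescaled surfaces have locally bounded mean curvature, one is forced to have $\langle y,\nu_y\rangle\to0$; so the support function vanishes on the smooth part of any limit, which is precisely the statement that the limit is scale-invariant, i.e. a minimal cone. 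Thus the shrinker equation gives the conical structure cheaply, and the substance of the theorem lies entirely in the existence and uniqueness of the limit.

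For existence I would argue by compactness: for fixed $R$ the sets $\lambda\Sigma\cap\overline{B(0,R)}$ are closed subsets of a fixed compact set, so by the Blaschke selection theorem every sequence $\lambda_i\to0$ has a Hausdorff-convergent subsequence, and a diagonal argument over $R\to\infty$ yields local Hausdorff subsequential limits. The crucial step is \emph{uniqueness} of the limit, independent of the sequence. Note that uniqueness automatically upgrades to the cone property: since scaling is continuous for local Hausdorff convergence, $\lim_{\lambda\to0}(t\lambda)\Sigma$ equals both $\lim_{\lambda\to0}\lambda\Sigma$ (as $t\lambda\to0$) and $t\cdot\lim_{\lambda\to0}\lambda\Sigma$, forcing $t\mathcal{C}=\mathcal{C}$ for every $t>0$. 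Hence the whole theorem reduces to showing the subsequential limits coincide.

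The hard part, and where I expect Proposition \ref{PMsimple} to do its work, is precisely this non-oscillation: a priori the asymptotic picture could rotate with the scale, so that distinct sequences $\lambda_i\to0$ yield distinct cones. The comparison principle is designed to exclude exactly this. If two rescalings produced transverse sheets at infinity, or if a single end oscillated, then by the Sard argument used in Corollary \ref{comparaison} one could choose coordinates $(e_1,\dots,e_{n+1})$ in which two graphical pieces of $\Sigma$ — coming either from two distinct ends or from two passes of the same end — have a vertical-separation function $h$ attaining an interior local minimum with $\langle\nu,e_{n+1}\rangle\neq0$; Proposition \ref{PMsimple} then says that $\Sigma$ is not a self-shrinker, a contradiction. (Alternatively, uniqueness could be extracted from Huisken's monotonicity formula for the flow $M_t$, whose Gaussian density is monotone and pins down the limit as $t\to0^-$; but the maximum-principle route is the one consistent with this note.) I would regard converting this geometric non-crossing into a clean uniqueness statement as the main obstacle.

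For the two-dimensional refinement, the issue is that the Hausdorff limit $K=\mathcal{C}\cap\mathbb{S}^2$ could have positive area if the ends of $\Sigma$ wound space-fillingly, so one must show $K$ is genuinely one-dimensional. My plan is to control the links $L_r:=\tfrac1r(\Sigma\cap S(0,r))\subset\mathbb{S}^2$. For a.e. $r$ the set $\Sigma\cap S(0,r)$ is a disjoint union of at most $N$ embedded circles, so $L_r$ has at most $N$ components. Using the polynomial area growth of a properly immersed shrinker together with the coarea inequality $\mathrm{Area}(\Sigma\cap B(0,R))\ge\int_0^R\mathrm{length}(\Sigma\cap S(0,r))\,dr$ (valid since $|\nabla^\Sigma r|\le1$), one obtains that $\mathrm{length}(L_{r_j})$ stays bounded along some sequence $r_j\to\infty$. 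The bounded number of components is then used, again through Proposition \ref{PMsimple} to forbid the sheets from folding, to promote this to actual convergence $L_r\to K$ with $\mathcal{H}^1(K)<\infty$; a set of finite $\mathcal{H}^1$-measure in $\mathbb{S}^2$ has $\mathcal{H}^2(K)=0$. I expect the delicate point here to be exactly this last conversion: ruling out that finitely many circles sweep out a positive-measure family as $r$ varies, which is once more the province of the comparison principle.
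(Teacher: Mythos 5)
There is a genuine gap, and you have located it yourself: you reduce the whole theorem to the uniqueness of subsequential blow-down limits, and then do not prove uniqueness. Saying that Proposition \ref{PMsimple} ``is designed to exclude'' oscillation is not an argument --- the entire content of the step is to exhibit a concrete pair of hypersurfaces for which the separation function $h$ attains a finite \emph{interior} minimum with $\langle \nu_a, e_{n+1}\rangle \neq 0$, and your sketch (comparing ``two graphical pieces of $\Sigma$'' coming from two scales or two passes of an end) does not explain why such a minimum is attained rather than escaping to a boundary or to infinity. The paper's proof supplies the missing configuration, and it is not a comparison of $\Sigma$ with itself: one compares the far part $Y_r = \Sigma \setminus B(0,r)$ with the \emph{round self-shrinking sphere} $S(0,\sqrt{2n})$, viewed along the direction $-a/\|a\|$ of a hypothetical point $a \in S(0,r)$ that radially shadows a point of $Y_r$ but lies at spherical distance $> \sqrt{2n}$ from the trace $K^1_r = \Sigma \cap S(0,r)$; the hypothesis $d(a,K^1_r) > \sqrt{2n}$ is exactly what keeps the minimum of $h$ away from $\partial Y_r$, and Sard's lemma gives the nondegenerate direction. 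This yields the quantitative monotonicity (\ref{monotonie}): for $\mu \leq \lambda$, the trace $K^\mu_r$ lies in the $\sqrt{2n}\lambda$-neighborhood of $K^\lambda_r$. That is a Cauchy-type estimate with explicit rate, so existence and uniqueness of the limit come out \emph{simultaneously}, and your Blaschke-selection/diagonal step becomes unnecessary. Note also that your ``cheap'' conicality heuristic via locally bounded mean curvature of the rescalings is unsound --- for the cylinder $\mathbb{S}^1(\sqrt{2})\times\mathbb{R}$ one has $H \to \infty$ under blow-down and the limit (a line) is not a smooth limit of graphs --- but this does not matter, since your scaling-continuity argument deriving $t\mathcal{C} = \mathcal{C}$ from uniqueness is correct and is all that is needed.

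On the $n=2$ refinement your route is viable but again leans on Proposition \ref{PMsimple} where a different tool is required. Once uniqueness of the limit is known, your coarea argument does give a sequence $r_j \to \infty$ with the rescaled links of bounded total length and at most $N$ components; what then converts Hausdorff convergence into $\mathcal{H}^1(K) < \infty$ is Go\l\c{a}b's lower-semicontinuity theorem for $\mathcal{H}^1$ on compacta with boundedly many connected components --- a classical fact of geometric measure theory, not a consequence of the comparison principle. The paper argues in the contrapositive direction instead: if the limit link had area $\geq \epsilon$, then since $K_r \subset L^\lambda_r$ for every $\lambda$, a tube-area lemma (the $\eta$-neighborhood of a connected curve of length $l$ in $S(0,r)$ has area at most $\eta(C_1 l + C_2 \eta)$, summed over the at most $N$ components) forces $\mathrm{Vol}_1(\Sigma \cap S(0,r)) = \sigma(r)\,r$ with $\sigma \to \infty$, which via coarea contradicts the quadratic area growth of proper self-shrinkers \cite{DingXin}. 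Both arguments combine coarea with quadratic growth, just in opposite directions; yours would be fine once Go\l\c{a}b is cited, but the main theorem's central estimate remains unproven in your proposal.
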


\begin{proof}

Denote by $d_H$ the Hausdorff distance for non-empty compact sets in $\mathbb{R}^{n+1}$ and $\Sigma_\lambda = \lambda \Sigma $ for $\lambda>0$. Let  $r>0$ fixed. Define
$$K_r^\lambda = \Sigma_\lambda \cap S(0,r).$$ 
It is sufficient to show that $K_r^\lambda$ converges in the Hausdorff metric to a compact set $K_r\subset S(0,r)$ when $\lambda$ goes to $0$.

Let's introduce some other notations:
if $0<\lambda<r/\sqrt{2n}$ we write $$L_r^\lambda = \{ p \in S(0,r)  ;  d(p,K_r^\lambda) \leq \sqrt{2n}\lambda \}$$
where $d$ is the distance between a point and a closed set,
 $$M^\lambda _r = \cap_{\lambda \leq \mu \leq \frac{r}{\sqrt{2n}}} L^\mu _r,$$
 $$Y_r = \Sigma \setminus B(0,r)$$ 
and $$Z_r^\lambda = \{ p\in \mathbb{R}^{n+1} ; ||p|| \geq r \text{  and  } r\frac{p}{||p||} \in L_r^\lambda \}.$$

Firstly, let's show a kind of monotonicity relation:
\begin{equation} \label{monotonie}
\forall \lambda, \mu \in]0,\frac{r}{\sqrt{2n}}[ \quad \mu \leq \lambda \Rightarrow K^\mu_r \subset L^\lambda_r.
\end{equation}

Note that if $\lambda = \frac{r}{\sqrt{2n}}$, then $K^\lambda_r \neq \varnothing$. Indeed, by homogeneity, this is tantamount to saying that $K^1_{\sqrt{2n}} \neq \varnothing$, which is true by the Maximum Principle. Likewise, to prove \ref{monotonie}, we need to show
$$\forall r>\sqrt{2n} \quad Y_r \subset Z^1_r .$$
To prove this, suppose that there is $a\in S(0,r) \setminus L^1_r $ and $b \in Y_r$ such that $r\frac{b}{||b||} = a$. Choose $e_{n+1}$ "very close" to $-\frac{a}{||a||}$, in a sense precised below. Consider then the two following hypersurfaces $\mathcal{S}_1$ and $\mathcal{S}_2$: $\mathcal{S}_1$ is the spherical self-shrinker of radius $\sqrt{2n}$ centered at the origin and $\mathcal{S}_2 = Y_r$. Now, for $e_{n+1}$ well chosen near $-\frac{a}{||a||}$, the function $h : \mathcal{S}_1 \to \mathbb{R} \cup \{ \infty \}$ is not  $\infty$ everywhere and attains its minimum on points not on the boundary of $\mathcal{S}_2$ because $d(a,K^1_r ) > \sqrt{2n}$. We conclude with Proposition \ref{PMsimple}. In particular, $M^\lambda_r$ is not empty if $K^\lambda_r$ is not.

Suppose now that $\Sigma$ is unbounded, otherwise there is nothing to prove. This means that $K^\lambda_r \neq \varnothing$ for $ \lambda\leq \frac{r}{\sqrt{2n}}$. Define $$K_r = \cap_{\lambda\leq \frac{r}{\sqrt{2n}}} L^\lambda_r.$$
The $M^\lambda_r$ being non empty for $\lambda\leq \frac{r}{\sqrt{2n}}$ and included one in the other, $K_r$ is a non empty compact set. Let's show that $K^\lambda_r$ converges to $K_r$. As $K^\lambda_r \subset M^\lambda_r$,
$$\sup_{x \in K^\lambda_r} d(x,M^\lambda_r) = 0.$$
Then by the definition of $M^\lambda_r$, if $y \in M^\lambda_r$, there exists $s \in K^\lambda_r$ such that $d(x,y)\leq \sqrt{2n}\lambda$, so $$\sup_{y \in M^\lambda_r} d(y,K^\lambda_r) \leq  \sqrt{2n}\lambda.$$
Now, $K_r$ is the intersection of the $M^\lambda_r$ which constitute a decreasing nested sequence of non empty compact sets so $M^\lambda_r$ converge to $K_r$. Finally,
\begin{align*}
d_H(K^\lambda_r,K_r) & \leq d_H(K^\lambda_r , M^\lambda_r) + d_H(M^\lambda_r,K_r) \\
& \to_{\lambda \to 0} 0,
\end{align*}
which is the desired convergence.

In what follows, suppose that $n=2$, that $\Sigma$ is properly immersed and that the number of connected components of $S(0,r) \cap \Sigma$ is bounded when $r \to \infty$. The $d$-dimensional Lebesgue measure of a submanifold of $\Sigma$ its $d$-volume, denoted by $\mathrm{Vol}_d(M)$. We have

\begin{equation} \label{aire}
\mathrm{Vol}_2(L^\lambda_r) \to 0  \text{  as  } \lambda \to 0^+.
\end{equation}	

To show this, we use the Euclidean volume growth for properly immersed self-shrinkers (\cite{DingXin}). Because $K_r$ is the intersection of the $L^\lambda_r$, it is sufficient to prove that there is a sequence $(\lambda_k)_{k \in \mathbb{N}}$ converging to $0$ with $\mathrm{Vol}_2(L^{\lambda_k}_{r}) \to 0$. The Sard Lemma implies that for almost all $\lambda>0$, $K^\lambda_r$ is a $1$-dimensional submanifold of $S(0,r)$. Let $\Lambda$ be the set of those $\lambda$. Suppose that there is an $\epsilon>0$ and a $\lambda_0>0$ such that
\begin{equation} \label{absurde}
\forall \lambda<\lambda_0, \mathrm{Vol}_2(L^{\lambda}_{r}) > \epsilon,
\end{equation}	
we want to find a contradiction.

\begin{lemme}
Take $r>0$, $0<\eta <1$. Suppose that  $\gamma \subset S(0,r) \subset \mathbb{R}^3$ is an immersed closed curve of length $l$. Then there exist $C_1$, $C_2$ two constants independent of $\gamma$, $r$ and $\eta$ such that  
$$\mathrm{Vol}_2( \{x \in S(0,r) ; d(x,\gamma) \leq \eta\}) \leq \eta.(C_1.l+C_2.\eta).$$
\end{lemme}

\begin{proof}(of the lemma)

Let $k$ be the maximal number of discs of radius $\eta$ (in $S(0,r)$) such that they are disjoint and centered on a point of $\gamma$. As $\gamma$ is connected, $$(k-1) \leq \frac{l}{2 \eta}.$$
Let $\mathcal{F}$ be a family of such discs, with $k$ elements. It is not empty (i.e. $k\neq0$) and the distance between $\bigcup_{D \in \mathcal{F}} D$ and a point of $\gamma$ which is not in $\bigcup_{D \in \mathcal{F}} D$ is less than $\eta$: otherwise one could add to $\mathcal{F}$ the disc centered on this point, which would contradict the maximality of $\mathcal{F}$. Consequently,
$$\mathrm{Vol}_2(\Gamma) \leq \tilde{C} k. \pi (3\eta)^2 \leq \tilde{C} (\pi (3\eta)^2 + \frac{l}{2 \eta}\pi (3\eta)^2) \leq \tilde{C}(\pi (3\eta)^2 + \frac{9}{2} \pi l.\eta)$$
where $\tilde{C}$ depends on $r$ and the lemma is proved.
\end{proof}

This lemma and the assumption (\ref{absurde}) imply that $$\mathrm{Vol}_{1}(K^\lambda_r) \to \infty$$ when $\lambda \in \Lambda$ goes to $0$. Equivalently, one can write that $$\mathrm{Vol}_{1}(K^1_r) = \sigma(r)r,$$ where $\sigma$ is a stricly positive function defined for almost all $r$ and converging to $\infty$. But, if $N$ is the norm function of $\mathbb{R}^{n+1}$, the co-area formula gives:

\begin{align*}
\mathrm{Vol}_2(\Sigma \cap \{\sqrt{2.2} \leq N \leq R\}) & = \int_{\Sigma \cap \{2 \leq N \leq R\}} d\mathrm{Vol}_2 \\
& \geq  \int_{\Sigma \cap \{{2} \leq N \leq R\}} ||\nabla N|| d\mathrm{Vol}_2 \\
& = \int_{{2}}^R \int_{K^1_r} d\mathrm{Vol}_{1} dr \\
& = \int_{{2}}^R \sigma(r)r dr \\
& \geq \frac{1}{2}(R^2-R_0^2) \min_{[R_0,R]}{\sigma}
\end{align*}
where $R>R_0$ are two real numbers greater than $2$, and $ \{2 \leq N \leq R\}$ denotes the set $B(0,R)\setminus B(0,{2})$. As $\sigma \to \infty$, this computation contradicts the quadratic volume growth of $\Sigma$, so in fact $\mathrm{Vol}_2(K_r)=0$. 

\end{proof}

Next, we give two propositions which are based on a refined form of the argument used previously to show the monotonicity relation, which was the key argument for proving Theorem \ref{cone}.

For a point $q\in \mathbb{R}^{n+1}$ and a vector $v \in \mathbb{R}^{n+1}$, we denote by $L(q,v)$ the half-line beginning at $q$ and whose direction is given by $v$. Recall also the notation $K^1_r = \Sigma \cap S(0,r)$ where $\Sigma$ is a complete properly immersed self-shrinker.

\begin{prop} \label{piege}
Let $r>0$ and $p_0$ be a point in $S(0,r)$ (also considered as a vector in $\mathbb{R}^{n+1}$). Define 
$$U_r(p_0) = \{q \in \mathbb{R}^{n+1}\setminus B(0,r) ; L(q,-p_0) \cap S(0,\sqrt{2n}) \neq \varnothing \},$$
 $$V_r = \bigcup_{p\in S(0,r) \text{  and  } U_r(p) \cap K^1_r=\varnothing} U_r(p) .$$
Consider the sets
$$Y_r = \Sigma \backslash B(0,r), $$
 $$X_r = \mathbb{R}^{n+1} \setminus (B(0,r) \cup V_r).$$
Then 
$$\forall r>\sqrt{2n} \quad Y_r \subset X_r.$$
\end{prop}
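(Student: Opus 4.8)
The plan is to argue by contradiction and reduce the statement to a single application of the maximum principle, following the template of the monotonicity step in the proof of Theorem \ref{cone}. Since $Y_r \subset \mathbb{R}^{n+1} \setminus B(0,r)$ holds by definition of $Y_r$, proving $Y_r \subset X_r$ amounts to proving $Y_r \cap V_r = \varnothing$. So suppose, toward a contradiction, that some $q \in Y_r$ lies in $V_r$; by definition of $V_r$ there is a point $p \in S(0,r)$ with $U_r(p) \cap K^1_r = \varnothing$ and $q \in U_r(p)$.

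The key idea is to compare $\Sigma$ against the spherical self-shrinker. I would set $\mathcal{S}_1 = S(0,\sqrt{2n})$ and $\mathcal{S}_2 = Y_r$; since $r > \sqrt{2n}$ these are disjoint complete hypersurfaces (the second one having boundary $K^1_r$), which is exactly the setting of Proposition \ref{PMsimple}. I would take the axis $e_{n+1}$ to be (a generic direction near) $-p/r$, so that the half-line $L(\cdot,-p)$ points in the positive $e_{n+1}$-direction. With this choice the membership $q \in U_r(p)$ says precisely that $q$ lies vertically below the sphere $\mathcal{S}_1$, so the function $h$ of Proposition \ref{PMsimple}, which measures the vertical gap from $\mathcal{S}_1$ down to $\mathcal{S}_2$, is finite at the point of $\mathcal{S}_1$ directly above $q$; in particular $h \not\equiv \infty$.

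Next I would produce an interior finite minimizer for $h$. Since $\mathcal{S}_1$ is compact and $h$ is lower semicontinuous, $h$ attains a finite minimum at some $a \in \mathcal{S}_1$, realized by a point $b \in \mathcal{S}_2 = Y_r$ lying directly below $a$. The crucial point is that $b$ cannot lie on the boundary $\partial Y_r = K^1_r$: by construction $b \in Y_r$ satisfies $\|b\| \geq r$ and the upward ray from $b$ in direction $-p$ meets the sphere at $a$, so $b \in U_r(p)$; since $U_r(p) \cap K^1_r = \varnothing$, indeed $b \notin K^1_r$. Thus both $a$ and $b$ are interior points. Choosing $e_{n+1}$ generic near $-p/r$ guarantees $\langle \nu_a, e_{n+1} \rangle \neq 0$, because the set of directions for which a matched pair $\nu_a = \nu_b$ is orthogonal to the axis has measure zero, exactly as in the transversality step of the proof of Corollary \ref{comparaison}. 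Proposition \ref{PMsimple} then forces one of $\mathcal{S}_1, \mathcal{S}_2$ not to be part of a self-shrinker, contradicting that the sphere is a self-shrinker and that $Y_r \subset \Sigma$ is. This contradiction shows no such $q$ exists, giving $Y_r \subset X_r$.

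The main obstacle I anticipate is the transversality and perturbation bookkeeping: replacing $-p/r$ by a nearby generic $e_{n+1}$ changes the notion of vertical, and one must check that the finiteness of $h$ and, above all, the inclusion $b \in U_r(p)$ that yields $b \notin K^1_r$ survive this tilt, since the trap hypothesis $U_r(p) \cap K^1_r = \varnothing$ is imposed for a fixed $p$ while the comparison axis is slightly rotated. I would handle this either by keeping $e_{n+1} = -p/r$ and arguing transversality at the minimizer directly (at a vertical-gap minimum the common tangent plane is non-vertical outside a degenerate case that can be excluded), or by enlarging the cylinder defining $U_r(p)$ by an arbitrarily small amount to absorb the tilt; both are routine, but this is the only place that genuinely requires care.
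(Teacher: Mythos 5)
Your proposal is correct and takes essentially the same route as the paper: argue by contradiction, apply Proposition \ref{PMsimple} with $\mathcal{S}_1$ the self-shrinking sphere $S(0,\sqrt{2n})$ and $\mathcal{S}_2 = Y_r$, with the axis $e_{n+1}$ chosen generically near $-p/\|p\|$, using $q \in U_r(p)$ for finiteness of $h$ and $U_r(p)\cap K^1_r = \varnothing$ for interiority of the minimizer. The tilt/transversality bookkeeping you flag at the end is precisely what the paper dispatches via the orthogonal projection $\Pi$ onto the hyperplane orthogonal to $p$ (the projected trace $\Pi(E\cap K^1_r)$ stays at positive distance from $\Pi(B(0,\sqrt{2n}))$, absorbing small tilts) together with the Sard lemma, so your second proposed fix coincides with the paper's argument.
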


\begin{proof}
Suppose the contrary: there is $p \in V_r \cap Y_r$. Apply Proposition \ref{PMsimple} to the following hypersurfaces: $\mathcal{S}_1$ is the self-shrinking sphere, and $\mathcal{S}_2$ is $Y_r$. Let $\Pi$ be the orthogonal projection on the vectorial hyperplane orthogonal to $p$, defined on the half-space $E = \{x\in \mathbb{R}^{n+1} ; \langle x , p\rangle>0\}$. Then, by the definition of $V_r$, $\Pi(B(0,\sqrt{2n}) \cap \Pi(E \cap K^1_r) \neq \varnothing$. Thanks to the Sard Lemma, Proposition \ref{PMsimple} is then applied with $e_{n+1}$ near $-p/||p||$ (see figure \ref{piegeimage}).

\end{proof}

 \begin{figure} 
\includegraphics[scale=0.4]{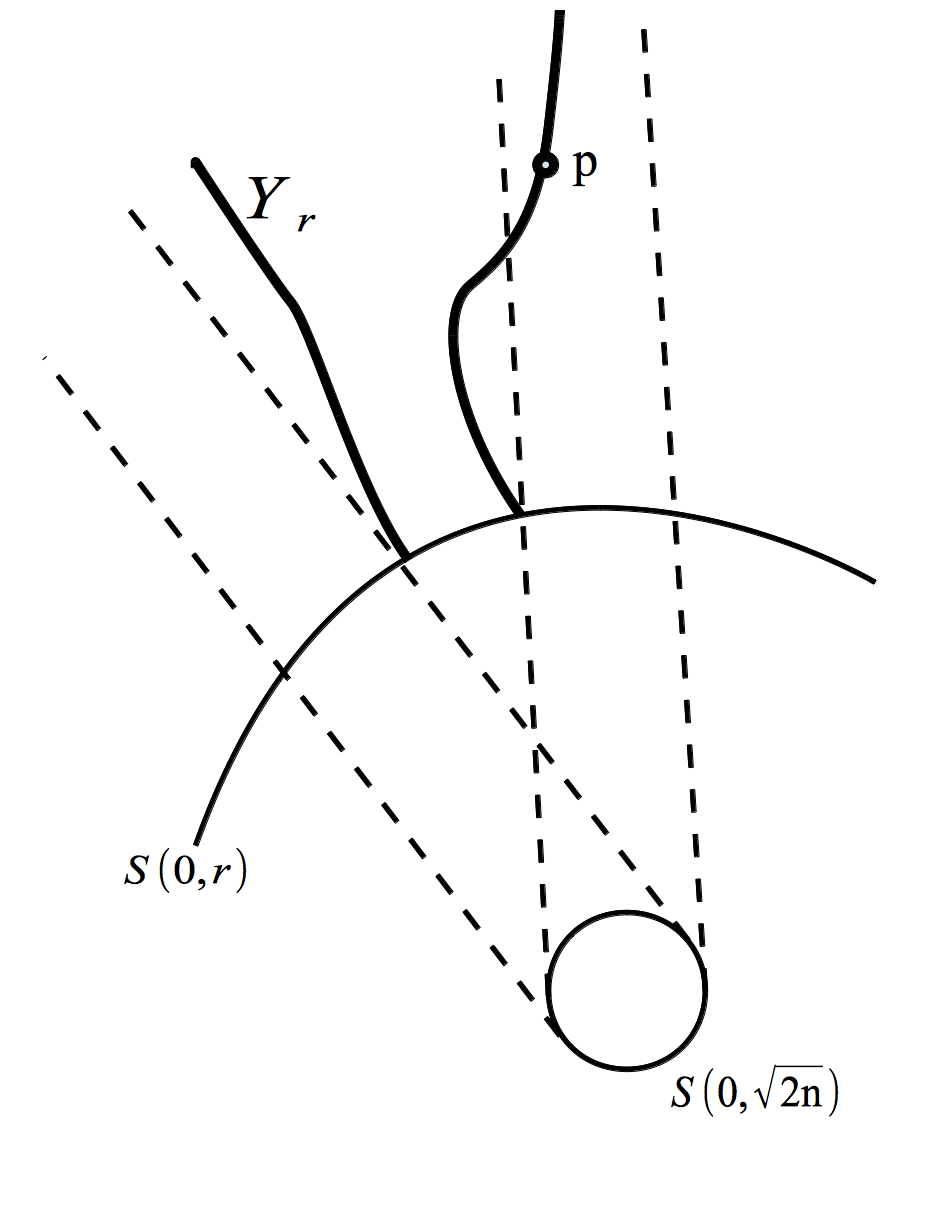}
\caption{In this figure, $\Sigma$ cannot be a self-shrinker because the point $p$ is out of $X_r$.}
\label{piegeimage}
\end{figure}

The following proposition illustrates the fact that some bound on the curvature of the trace $K^1_r=\Sigma\cap S(0,r)$ gives a bound on the mean curvature of $\Sigma$ in the case where $n=2$. 
\begin{prop} \label{boundH}
Suppose $n=2$ and let $\Sigma_0$ be an end of $\Sigma \subset \mathbb{R}^{3}$, i.e. a connected component of $\Sigma \setminus B(0,r_0)$ for a $r_0>0$. Let $\epsilon>0$. If $\Sigma_0$ intersects $S(0,r)$ transversally for all $r>r_0$ (in particular $\Sigma_0 \cap S(0,r)$ a union of closed simple curves) and if the curvature of $\Sigma_0 \cap S(0,r)$ is bounded by $\frac{1}{2}-\epsilon$ then the mean curvature of $\Sigma_0$ is bounded by ${2}$ if $r_0$ is chosen large enough.
\end{prop}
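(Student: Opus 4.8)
The plan is to bound $H$ directly by controlling the scalar $u:=\langle x,\nu\rangle=2H$ along the radial foliation of the end, turning the self-shrinker equation into an ODE whose only bounded solution is small. Since $\Sigma_0$ meets every $S(0,r)$ transversally, the norm function $N(x)=|x|$ has no critical point on $\Sigma_0$, so $W:=\nabla^{\Sigma}N/|\nabla^{\Sigma}N|$ is a smooth unit field and, together with a unit tangent $T$ to the trace $\gamma_r:=\Sigma_0\cap S(0,r)$, gives an orthonormal frame $\{T,W\}$ of $T\Sigma_0$. Writing $\cos\phi:=|\nabla^{\Sigma}N|\in(0,1]$ for the transversality angle, one records the elementary identities $\langle x,T\rangle=0$, $\langle x,W\rangle=r\cos\phi$ and $\langle x,\nu\rangle=r\sin\phi=2H$. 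The place where the curvature hypothesis enters is the $T$-direction: since $\nabla_TT$ is exactly the curvature vector of $\gamma_r$ as a space curve, $|\langle\nabla_T\nu,T\rangle|=|\langle\nu,\nabla_TT\rangle|$ is bounded by the curvature of $\gamma_r$, hence by $\tfrac12-\epsilon$.

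First I would compute the derivative of $u$ along the gradient flow lines of $N$ (the integral curves of $W$, reparametrised by $r$). Using $H=\diverg(\nu)=\langle\nabla_T\nu,T\rangle+\langle\nabla_W\nu,W\rangle$ together with (\ref{self}), one gets $\langle\nabla_W\nu,W\rangle=\tfrac12 u-\langle\nabla_T\nu,T\rangle$; and since $\nabla_W\langle x,\nu\rangle=\langle x,\nabla_W\nu\rangle=\langle\nabla_W\nu,W\rangle\,\langle x,W\rangle$ (because $\langle x,T\rangle=0$) while $dr/ds=\cos\phi$, the $\cos\phi$ factors cancel and one is left with the linear ODE
\[
\frac{du}{dr}=\frac{r}{2}\,u-r\,\langle\nabla_T\nu,T\rangle,\qquad \bigl|\langle\nabla_T\nu,T\rangle\bigr|\le\tfrac12-\epsilon .
\]
A quick check on the cylinder $\mathbb S^1(\sqrt2)\times\mathbb R$ (where $u\equiv\sqrt2$ and $\langle\nabla_T\nu,T\rangle\equiv 1/\sqrt2>\tfrac12$) confirms the coefficients and shows why the hypothesis excludes cylindrical ends.

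The coefficient $+r/2$ makes this ODE unstable, so the decisive step is to select the correct solution. The homogeneous solutions grow like $e^{r^2/4}$, whereas Cauchy--Schwarz gives the a priori bound $|u|=|\langle x,\nu\rangle|\le|x|=r$ on all of $\Sigma$; hence $e^{-r^2/4}u(r)\to0$ as $r\to\infty$ along each flow line (which reaches infinity by properness). Integrating $(e^{-r^2/4}u)'=-r\langle\nabla_T\nu,T\rangle e^{-r^2/4}$ from $r$ to $+\infty$ and discarding the vanishing boundary term yields
\[
u(r)=e^{r^2/4}\int_r^{\infty}s\,\langle\nabla_T\nu,T\rangle\,e^{-s^2/4}\,ds,
\]
whence $|u(r)|\le(\tfrac12-\epsilon)\,e^{r^2/4}\int_r^{\infty}s\,e^{-s^2/4}\,ds=1-2\epsilon$. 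Therefore $|H|=\tfrac12|u|\le\tfrac12-\epsilon$ on $\Sigma_0$, which is far better than the claimed bound by $2$.

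The main obstacle is precisely this last manoeuvre: one must integrate the unstable equation from $+\infty$ and use the crude bound $|u|\le r$ to annihilate the exponentially growing mode, rather than integrating forward from the inner edge (which would only return the initial datum). Taking $r_0$ large is then needed only to guarantee that the traces $\gamma_r$ are embedded circles and that the flow lines of $W$ are globally defined and remain in $\Sigma_0$; the curvature estimate itself is uniform and requires no such enlargement.
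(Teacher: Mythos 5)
Your proof is correct, but it takes a genuinely different route from the paper's. The paper argues by barriers: for each $x_0$ in the trace $I_r=\Sigma_0\cap S(0,r)$ it places two spherical caps $b_r(p_1),b_r(p_2)$ of radius $2$ tangent to $I_r$ at $x_0$ (possible precisely because, for $r_0$ large, $\partial b_r(p)$ has curvature exceeding $(1-\epsilon)/2$ while $I_r$ has curvature at most $\frac12-\epsilon$), shows by a connectivity argument that a neighborhood of $x_0$ in $\Sigma\setminus B(0,r)$ avoids the radial shadows $U_r(p_1)\cup U_r(p_2)$ of Proposition \ref{piege}, and then invokes the graphical maximum principle (Proposition \ref{PMsimple}) against the self-shrinking sphere $S(0,2)$; the constant $2$ in the conclusion is inherited from the radius $\sqrt{2n}=2$ of that sphere. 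You instead reduce the statement to a scalar ODE for $u=\langle x,\nu\rangle$ along the gradient flow lines of the norm function, and your computation checks out: the frame identities, the cancellation of the $\cos\phi$ factors giving $du/dr=\frac{r}{2}u-r\langle\nabla_T\nu,T\rangle$, the a priori bound $|u|\le r$ used to annihilate the $e^{r^2/4}$ mode by integrating backwards from infinity, and the cylinder sanity check are all correct; moreover the flow lines do reach every radius and go to infinity, since transversality for all $r>r_0$ plus properness give a positive lower bound for $|\nabla^{\Sigma}N|$ on each compact slab $\Sigma_0\cap\{r\le N\le R\}$. Your method buys a sharper, quantitative conclusion ($|H|\le\frac12-\epsilon$ instead of $2$) with no maximum-principle machinery, and it makes transparent why $\frac12$ is the critical threshold (the constant-forcing particular solution is $u\equiv 2g$, and the cylinder sits exactly at $g=1/\sqrt2>\frac12$); the paper's method, while cruder in its constant, is of a piece with the note's main theme and exercises Proposition \ref{piege}, which is reused later in the catenoid-neck exclusion of Theorem \ref{MAIN}. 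One small point to tighten: the statement's ``curvature of $\Sigma_0\cap S(0,r)$'' could mean the geodesic curvature inside $S(0,r)$ rather than the space-curve curvature; these differ through $k^2=k_g^2+1/r^2$, so under the former reading you only get $|\langle\nu,\nabla_TT\rangle|\le\frac12-\epsilon+O(1/r)$. This is in fact the one place where the freedom to enlarge $r_0$ is genuinely needed in your argument (it is not needed to define the flow lines, which transversality already provides), and the final bound then becomes, say, $\frac12-\epsilon/2$ --- still far below the claimed $2$.
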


\begin{proof}
Define $I_r = \Sigma_0 \cap S(0,r)$. For $p \in S(0,r)$, denote by $b_r(p)$ the intersection $B(q,{2}) \cap S(0,r)$ where $q$ is parallel to $p$ with the same direction and maximizes the $2$-volume of this intersection. As $r_0$ increases, $S(0,r)$ ($r>r_0$) becomes flat and the curve $\partial b_r(p)$ has curvature larger than ${(1-\epsilon)}/{2}$.

Consequently, if $x_0 \in I_r$, there exists $p_1,p_2 \in S(0,r)$ such that the $b_r(p_i)$ are tangent to $I_r$ at $x_0$ (one on each side of $x_0$) and $I_r$ is outside $b_r(p_1) \cup b_r(p_2)$ near $x_0$. To prove the proposition, it is sufficient to show that a neighborhood of $x_0$ in $\Sigma \backslash B(0,r)$ is outside $U_r(p_1) \cup U_r(p_2)$ (see notation in Proposition \ref{piege}). Indeed, the bound on the mean curvature will then follow from the self-shrinker equation (\ref{self}).
Suppose the contrary: for instance there are points $y_k \in \Sigma \backslash B(0,r)$ converging to $x_0$ with $y_k \in U_r(p_1)$. Consider $C$ the connected component of $(\Sigma \backslash B(0,r))\cap U_r(p_1)$ containing $y_k$, well defined for $k$ large. Note that because of the bound on the curvature, the boundary $\partial C$ only intersects $b_r(p_1)$ at $x_0$. This can be seen as follows: suppose that $\partial C$ intersects $b_{r}(p_1)$ at another point so that for $r'$ slightly bigger than $r$, $C \cap S(0,r') \cap U_r'(p_1)$ has more than one component. As $r'$ increases from $r$, by connectivity of $C$, two of these components of $C \cap S(0,r') \cap U_r'(p_1)$ have to meet smoothly for an $r'>r$. But this can only happen on the boundary $\partial b_{r'}(p)$, which would contradict the bound on the curvature of $\Sigma_0 \cap S(0,r)$. Then we apply once again Lemma \ref{PMsimple} to $S(0,{2})$ and $C$, with the Sard Lemma to guarantee that the function $h$ achieves a minimum.
\end{proof}

 \vspace{1cm}

\section{A linear bound for the second fundamental form of some self-shrinkring surfaces} \label{principale}

Let's define the curvature concentration:
\begin{definition}  \label{concentration}
Let $\kappa>0$. Let $\mathcal{S}$ be a surface in $\mathbb{R}^3$. The curvature concentration of $\mathcal{S}$ is bounded by $\kappa$ if 
$$\forall x \in \mathcal{S} \backslash B(0,1),\quad  \int_{\mathcal{S} \cap B(x,\rho(x))}  |A|^2 < \kappa,$$
where
$$\rho(x) =\frac{1}{2||x||}.$$
\end{definition}

Requiring that a surface has bounded curvature concentration seems quite restrictive, but it is in fact natural in the context of Brakke limit flows:

\begin{prop} \label{naturality}
Let $\{M\}_{t \in [0,T)}$ be a family of embedded surfaces flowing smoothly in $\mathbb{R}^3$ and beginning at a closed surface $M_0$. If $\mathcal{S}$ is a self-shrinker produced by a weak blowup at $T$ then there exists a constant $\kappa(M_0)>0$ depending only on $M_0$, such that the curvature concentration of $\mathcal{S}$ is bounded by $\kappa(M_0)$.

\end{prop}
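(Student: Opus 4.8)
The plan is to reduce the local $L^2$ curvature estimate to two separate bounds by writing, for a surface in $\mathbb{R}^3$, $|A|^2 = H^2 - 2K$ where $K$ is the Gauss curvature, and to control each integral using, respectively, the self-shrinker equation together with an area-ratio estimate, and the Gauss--Bonnet theorem together with the finite genus. The quantity carrying the dependence on $M_0$ is the entropy $\Lambda := \lambda(M_0)$: since the entropy is nonincreasing along the smooth flow $\{M_t\}$, is invariant under the parabolic rescalings defining the weak blowup, and is lower semicontinuous under the resulting convergence, the self-shrinker $\mathcal{S}$ satisfies $\lambda(\mathcal{S}) \le \Lambda$. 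Likewise the genus $g$ of $\mathcal{S}$ is bounded by that of $M_0$. Hence it suffices to bound $\int_{\mathcal{S}\cap B(x,\rho(x))}|A|^2$ in terms of $\Lambda$ and $g$ only.

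First I would extract from the entropy bound an area-ratio estimate: testing the Gaussian integral defining $\lambda(\mathcal{S})$ with center $x$ and scale $\rho(x)$ gives $\mathrm{Vol}_2(\mathcal{S}\cap B(x,\rho(x))) \le C\,\Lambda\,\rho(x)^2$ for a universal constant $C$. Combined with the self-shrinker equation (\ref{self}), which yields the pointwise bound $H^2 = \langle y,\nu\rangle^2/4 \le |y|^2/4$, and with the elementary inequality $|y| \le |x|+\rho(x) \le 2|x|$, valid on $B(x,\rho(x))$ whenever $|x|\ge 1$, I obtain
\begin{equation*}
\int_{\mathcal{S}\cap B(x,\rho(x))} H^2 \le |x|^2\,\mathrm{Vol}_2(\mathcal{S}\cap B(x,\rho(x))) \le C\Lambda\,|x|^2\rho(x)^2 = \tfrac{C\Lambda}{4},
\end{equation*}
where the last equality is precisely the reason for the normalization $\rho(x) = 1/(2|x|)$ in Definition \ref{concentration}. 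This disposes of the mean-curvature part with a bound depending only on $\Lambda$.

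It then remains to bound $\int_{\mathcal{S}\cap B(x,\rho(x))} K$, and I expect this to be the main obstacle. Choosing, via Sard's lemma, a radius for which $\Gamma := \mathcal{S}\cap B(x,\rho(x))$ is a smooth compact surface with boundary, Gauss--Bonnet gives $\int_\Gamma K = 2\pi\chi(\Gamma) - \int_{\partial\Gamma} k_g$. The genus contribution to $\chi(\Gamma)$ is controlled by $g$; the delicate points are to bound the number of boundary circles of $\Gamma$ and the total geodesic curvature $\int_{\partial\Gamma} k_g$. The number of components can be limited using the area-ratio bound above together with a lower area bound for each component meeting $B(x,\rho(x)/2)$, while the geodesic curvature of $\partial\Gamma \subset \mathcal{S}\cap S(x,\rho(x))$ can be estimated by comparing the second fundamental form of $\mathcal{S}$ with the curvature of the slicing sphere, again after a generic choice of radius.

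Finally, combining the two estimates through $|A|^2 = H^2 - 2K$ produces a constant depending only on $\Lambda = \lambda(M_0)$ and on $g$, both of which are controlled by $M_0$; this is the desired $\kappa(M_0)$. The role of the weak-blowup hypothesis is thus confined to two soft inputs: the multiplicity-one smooth convergence of the rescaled flows to $\mathcal{S}$ away from the singular point, which guarantees that $\mathcal{S}$ is a smooth embedded self-shrinker on which the above integrals are well defined, and the monotonicity of entropy and genus along the flow, which is what makes the final constant depend only on $M_0$.
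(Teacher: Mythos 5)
Your treatment of the $H^2$ half is fine: entropy monotonicity, scale invariance and lower semicontinuity do give $\lambda(\mathcal{S})\le\lambda(M_0)$, hence uniform area ratios, and combined with $H^2\le |y|^2/4$ from (\ref{self}) the normalization $\rho(x)=1/(2|x|)$ indeed makes $\int_{\mathcal{S}\cap B(x,\rho(x))}H^2$ bounded by a multiple of $\lambda(M_0)$. The genuine gap is in the Gauss--Bonnet half, and it is fatal as sketched. Writing $-\int_\Gamma K = \int_{\partial\Gamma}k_g - 2\pi\chi(\Gamma)$, you must bound $\int_{\partial\Gamma}k_g$ from above and $\chi(\Gamma)$ from below, and neither is accessible without a priori curvature control: the geodesic curvature of the slice $\mathcal{S}\cap S(x,r)$ involves the second fundamental form of $\mathcal{S}$ along the slice divided by the sine of the intersection angle, and Sard only gives transversality, not a quantitative angle bound; any attempt to control $\int_{\partial\Gamma}|A|$ at a generic radius via the coarea formula and Cauchy--Schwarz reintroduces $\bigl(\int|A|^2\bigr)^{1/2}$ over the ball --- exactly the quantity you are trying to bound, with no a priori finiteness to run an absorption. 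Similarly, the number of boundary circles of $\Gamma$ is not controlled by area lower bounds: components of the slice can be arbitrarily short circles bounding thin slabs near $S(x,\rho(x))$, so $\chi(\Gamma)$ has no lower bound in terms of $g$ and the area ratio alone. This circularity is precisely why the paper does not attempt a static estimate on $\mathcal{S}$: it invokes Ilmanen's \emph{parabolic} local estimate (\ref{curvature estimate}), $r^{-2}\int_{t-r^2}^t\int_{M_t\cap B(x,r)}|A|^2\le C(M_0)$, whose proof does use Gauss--Bonnet but along the smooth flow, where integration in time and the evolution equations tame exactly the boundary terms you cannot control.

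A second, independent problem is your "soft input" that the weak blowup converges smoothly with multiplicity one away from the singular point. That is the multiplicity-one conjecture and is not part of the Ilmanen--White compactness you may use; the blowup gives convergence as Radon measures for all $t<0$ and as varifolds for a.e.\ $t$ only. The paper transfers Ilmanen's estimate to $\mathcal{S}$ precisely through this weaker convergence, using Hutchinson's lower semicontinuity of the integral of the squared generalized second fundamental form under varifold convergence, applied at a well-chosen time $t'$ and scale $r=\lambda_i$. Your route also smuggles in a genus dependence (the proposition assumes none), which would itself require White's genus monotonicity and local genus bounds for the limit. In short: replace the static Gauss--Bonnet argument by the citation of (\ref{curvature estimate}) and replace smooth multiplicity-one convergence by varifold convergence plus lower semicontinuity, and the proof closes; as written, it does not.
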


\begin{proof}
Let's recall the definition of a weak blowup: consider the family $\{M\}_{t \in [0,T)}$ as above. Let $\lambda_i >0$ be a sequence converging to $0$. Rescale the flow parabolically about $(y,T)$ for some $y$ by defining
$$M^i_t = \lambda_i^{-1} . (M_{T+\lambda_i^2t}-y), \quad t \in [-T/\lambda_i^2,0).$$
Ilmanen \cite{Ilmanen2} and White \cite{White} proved that by taking a subsequence if necessary, $\{M^i_t\}_t $ converges to a limiting Brakke self-shrinking flow $\{\sqrt{-t} \mathcal{S}\}_{t \in (-\infty, 0)}$ in the following sense: 

\begin{enumerate}
\item for all $t<0$, $M^i_t \to \sqrt{-t} \mathcal{S}$ in the sense of Radon measures,
\item for a.e. $t<0$, there is a subsequence $\{i_k\}$ depending on $t$ such that $M^i_t \to \sqrt{-t} \mathcal{S}$ as varifolds.
\end{enumerate}
Moreover, Ilmanen showed that in dimension $3$, the limit flow is smooth. This procedure is called a weak blowup. Then Ilmanen gives the following integral curvature estimate (Theorem $4$ in \cite{Ilmanen2}): for every $B(x,r)\times [t-r^2,t) \subset \mathbb{R}^3\times[0,T)$,
\begin{equation} \label{curvature estimate}
r^{-2} \int_{t-r^2}^t\int_{M_t \cap B(x,r)} |A|^2 \leq C(M_0).
\end{equation}
By lower semicontinuity of the integral of the squared norm of the generalized second fundamental form (see \cite{Hutchinson}, Theorem $5.3.2$), it yields the first part of the proposition. Indeed, without loss of generality suppose that $y=0$, and consider $z \in \mathcal{S} \subset{R}^3$ whose norm is bigger than $1$. Applying (\ref{curvature estimate}) to $t = T-\lambda_i^2 ||z||^2$, $x={\lambda_i} ||z||.z$ and $r=\lambda_i$ for all $i$, we obtain a subsequence $i_k$ and a time $t' \in [-||z||^2-1,-||z||^2]$ such that 
\begin{enumerate}
\item $M^{i_k}_{t'} \cap B(||z||.z,1)$ converge to $\sqrt{-t'}\mathcal{S} \cap B(||z||.z,1)$ as varifolds, 
\item $B(\sqrt{-t'}z,\sqrt{-t'}/(2||z||)) \subset B(||z||.z,1)$,
\item $\int_{M^{i_k}_{t'} \cap B(||z||.z,1)} |A|^2$ is bounded by a constant depending only on $M_0$. 
\end{enumerate}
Consequently, after rescaling, we get that $\int_{\mathcal{S} \cap B(z,\rho(z))} |A|^2$ is bounded by a constant depending only on $M_0$, where $\rho(z) =\frac{1}{2||z||}$.

\end{proof}

The main theorem of this section gives a linear bound for $|A|$:

\begin{theo} \label{MAIN}
If $\Sigma \subset \mathbb{R}^{3}$ is a complete properly embedded self-shrinker of finite genus $g$ such that the curvature concentration is bounded by $\kappa$, then
 $$\exists C=C(\Sigma), \forall x \in \Sigma, |A(x)| \leq C(1+ |x|).$$ 
 \end{theo}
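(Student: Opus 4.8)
The plan is to argue by contradiction through a blow-up: a supralinear curvature blow-up will be rescaled into a nonflat complete minimal surface, which will then be contradicted using the self-shrinker maximum principle of Proposition \ref{PMsimple}. First I would note that on the compact set $\overline{\Sigma\cap B(0,1)}$ the bound is automatic, so it suffices to control $|A(x)|/|x|$ for $|x|$ large. Assume the conclusion fails, so there is a sequence $x_k\in\Sigma$ with $|x_k|\to\infty$ and $|A(x_k)|/|x_k|\to\infty$. I would run a standard point-selection (in the style of Schoen's curvature estimates) adapted to the concentration scale $\rho(x)=1/(2|x|)$ of Definition \ref{concentration}: after replacing $x_k$ if necessary, one arranges that $\lambda_k:=|A(x_k)|\to\infty$, that $\lambda_k\rho(x_k)=\lambda_k/(2|x_k|)\to\infty$, and that $|A|\le 2\lambda_k$ on the intrinsic ball of radius $R_k/\lambda_k$ about $x_k$ with $R_k\to\infty$. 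This last property is what forces the rescaled surfaces to have locally bounded geometry.

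Next I would set $\Sigma_k=\lambda_k(\Sigma-x_k)$. A point $y\in\Sigma_k$ corresponds to $x=x_k+y/\lambda_k\in\Sigma$, and since the mean curvature scales by $\lambda_k^{-1}$, the self-shrinker equation (\ref{self}) becomes on $\Sigma_k$
\[
H_{\Sigma_k}(y)=\frac{\langle x_k,\nu\rangle}{2\lambda_k}+\frac{\langle y,\nu\rangle}{2\lambda_k^2}.
\]
For $|y|$ bounded the second term is $O(\lambda_k^{-2})$, while the first is at most $|x_k|/(2\lambda_k)=1/(4\lambda_k\rho(x_k))\to 0$; hence $H_{\Sigma_k}\to 0$ locally uniformly. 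Combined with the curvature bound from the point-selection and the fact that for each fixed $R$ the ball $B(x_k,R/\lambda_k)$ eventually lies inside the concentration ball $B(x_k,\rho(x_k))$ — so that $\int_{\Sigma_k\cap B(0,R)}|A|^2<\kappa$ by the scale invariance of $\int|A|^2$ in dimension two — a subsequence of the $\Sigma_k$ converges smoothly, with multiplicity one near $0$ (where $|A|=1$), to a complete embedded minimal surface $\Sigma_\infty\subset\mathbb{R}^3$ with $|A_{\Sigma_\infty}|(0)=1$, $\int_{\Sigma_\infty}|A|^2\le\kappa$, and genus at most $g$. In particular $\Sigma_\infty$ is a \emph{nonflat} complete embedded minimal surface of finite total curvature.

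Now I would exploit the classical structure theory for such surfaces: a nonflat complete embedded minimal surface of finite total curvature has at least two ends, each graphical over the exterior of a disk in a fixed plane, with all ends asymptotically parallel. Choosing the axis $e_3$ along this common direction — permissible for a positive-measure set of directions by Sard's lemma, exactly as in Corollary \ref{comparaison} — two adjacent sheets of $\Sigma_\infty$ are graphs $f_1>f_2$ over a common domain; using that these sheets are joined through the compact core of the connected surface $\Sigma_\infty$, the vertical separation $f_1-f_2$ attains a positive interior local minimum at a point with non-horizontal normal. Since $\Sigma_k\to\Sigma_\infty$ smoothly, this configuration persists in $\Sigma_k$ for large $k$, and transplanting it by the similarity $y\mapsto x_k+y/\lambda_k$ produces, inside the genuine self-shrinker $\Sigma$, two disjoint sheets whose vertical gap has an interior local minimum with $\langle\nu,e_3\rangle\neq 0$. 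This contradicts Proposition \ref{PMsimple}.

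The delicate step is the last one: extracting from $\Sigma_\infty$ an honest interior local minimum of the gap between two sheets. A naive local model such as a single catenoidal neck has monotone separation and no interior minimum, so one cannot rely on the local picture alone; instead one must use the global structure of $\Sigma_\infty$ — the reconnection of its ends through the compact core, together with the finite-genus and proper-embedding hypotheses, and if necessary the asymptotic cone description of Theorem \ref{cone} — to guarantee that two sheets of $\Sigma$ genuinely come closest at an interior point. Making this robust, and ensuring that the point-selection yields a \emph{complete} rather than merely local minimal limit, are the two places where care is required.
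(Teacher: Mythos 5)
Your blow-up scaffolding is essentially the paper's: the point selection at the scale $\rho(x)=1/(2\|x\|)$ is Lemma \ref{technical}, your computation that $H_{\Sigma_k}\to 0$ locally uniformly is equation (\ref{k}), and the use of scale invariance of $\int|A|^2$ plus the concentration bound to extract a smooth, multiplicity-one limit $\Sigma_\infty$ which is a nonflat complete embedded minimal surface of finite total curvature is equation (\ref{courbure totale}); the paper then pins $\Sigma_\infty$ down as a catenoid (genus $0$ since $\|x_k\|\to\infty$, via Osserman and L\'opez--Ros/Meeks--Rosenberg). The genuine gap is precisely the step you flag as ``delicate,'' and it is not repairable by the means you propose. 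The limit \emph{is} a catenoid, and for a catenoid with axis $e_3$ the vertical separation of the two sheets over a point at distance $r$ from the axis is $2\operatorname{arccosh}(r)$: strictly monotone, with infimum approached only at the neck circle, where $\langle \nu, e_3\rangle=0$. So in the only region of $\Sigma$ that the convergence controls (scale $O(1/\lambda_k)$ around $x_k$), \emph{both} hypotheses of Proposition \ref{PMsimple} fail -- no interior attained minimum, and horizontal normal where the infimum is approached. Appealing to ``the reconnection of the ends through the compact core of $\Sigma_\infty$'' does not help, because $\Sigma_\infty$ has no such reconnection (its sheets separate forever), and the behavior of $\Sigma$ itself beyond the blow-up scale is exactly what is uncontrolled: the infimum of the gap function $h$ between the two sheets of $\Sigma$ need not be attained at all. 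This is not a technicality: the paper introduces the $\delta$-separated-ends hypothesis (Definition \ref{separate}) for the sole purpose of forcing $h$ to attain its minimum, which is what lets Corollary \ref{comparaison} kill catenoidal necks in Theorem \ref{main}; Theorem \ref{MAIN} has no such hypothesis, so your endgame cannot close there.

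What the paper does instead -- and what your proof is missing -- is a dichotomy on $v=\lim x_k/\|x_k\|$ relative to the catenoid axis $e_3'$, using global barriers rather than a local graph comparison at the neck. If $v$ is parallel to $e_3'$, Proposition \ref{piege} (the trapping region $X_r$ built from half-lines meeting the self-shrinking sphere, itself a consequence of Proposition \ref{PMsimple} applied with the sphere $S(0,2)$ as one of the two hypersurfaces, where the minimum of $h$ \emph{is} attained) forces the component $\Sigma^+_k$ bounded by the tiny neck curve $c_k$ to lie in a narrow tube running to infinity; but $\Sigma^+_k$ is close to a plane orthogonal to $v$ with a small half-neck, a contradiction. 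If $v$ is not parallel to $e_3'$, the usual maximum principle forces both components $\Sigma^\pm_k$ to intersect the fixed sphere $S(0,2)$ for all large $k$, and infinitely many necks whose two sides all reach a fixed compact set contradict finite genus together with proper embeddedness. So the correct completion of your argument is not a refined search for an interior minimum of the sheet separation, but this replacement of the final step by the Proposition \ref{piege}/properness case analysis.
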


\begin{remarque}
Self-shrinkers can be viewed as minimal surfaces under a conformal change of metric but, as noted in \cite{CM2}, this kind of result does not follow for instance from Choi-Schoen's compactness theorem \cite{ChoiSchoen} mainly because the new metric can not even be extended to a complete metric.
\end{remarque}

\begin{proof}
Let $K=(H^2-|A|^2)/2$ denote the Gauss curvature. Suppose that the conclusion is not verified along a sequence of points $p_k \in \Sigma$. Define $\mu_k = {|A(p_k)|}$. Then, because of the self-shrinker equation (\ref{self}):

\begin{equation} \label{k}
\forall \alpha>0 \quad \max_{y \in B(p_k,\alpha/\mu_k)} H(y)/\mu_k \to 0.
\end{equation}
Moreover, $-K(p_k)/|A(p_k)|^2 \to 1/2$.
By modifying the $p_k$ if necessary, $\mu_k . (\Sigma - p_k)$ is a sequence of surfaces whose second fundamental form is locally uniformly bounded and whose mean curvature goes to zero. Indeed, we have

\begin{lemme} \label{technical}
Under the hypotheses of the theorem, there is a sequence $p_k $ such that
 $$ \forall \alpha>0 , \quad |A(p_k)|/(1+||p_k)|) \to \infty  \text{ and }$$
 $$\max \{|A(q)|/\mu_k ; q \in \Sigma \cap B(p_k, \alpha/\mu_k)\}  \leq 2 \text{  for large $k$.   } $$

\end{lemme}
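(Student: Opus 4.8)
The plan is to argue by contradiction using a point-selection (``point-picking'') procedure of Schoen type. If the conclusion of the theorem fails, then no admissible constant $C$ exists, which is equivalent to
$$\sup_{x \in \Sigma} \frac{|A(x)|}{1+|x|} = +\infty.$$
For each $k \in \mathbb{N}$ I would run a greedy doubling procedure that promotes a point of large curvature-to-scale ratio into one that is, in addition, nearly curvature-maximal on the scale $1/|A|$, and take $p_k$ to be the output. The decisive trick is to let the search radius grow with $k$ (I use $k/|A|$), since this is what lets a single sequence $p_k$ satisfy the maximality bound for every fixed $\alpha$ once $k$ is large.

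For fixed $k$, I would first choose a base point $y_0 \in \Sigma$ with $|A(y_0)|/(1+|y_0|) \ge k$, which exists because the supremum is infinite. Given $y_i$, if $\sup_{\Sigma \cap B(y_i, k/|A(y_i)|)} |A| \le 2|A(y_i)|$ I stop and set $p_k = y_i$; otherwise I select $y_{i+1} \in \Sigma \cap B(y_i, k/|A(y_i)|)$ with $|A(y_{i+1})| > 2|A(y_i)|$ and iterate. By construction $|A(y_i)| \ge 2^i |A(y_0)|$, so the curvature at least doubles at each step, while the extrinsic displacement at step $i$ is at most $k/|A(y_i)| \le k\,2^{-i}/|A(y_0)|$. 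Summing the geometric series and using $|A(y_0)| \ge k$, every $y_i$ stays in the extrinsic ball $B(y_0, 2k/|A(y_0)|) \subseteq B(y_0,2)$.

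The step I expect to be the crux is showing that this process must terminate, and here proper embeddedness is essential: $\Sigma \cap \overline{B(y_0,2)}$ is compact, so $|A|$ is bounded on it, which is incompatible with $|A(y_i)| \ge 2^i|A(y_0)| \to \infty$. Hence after finitely many steps the stopping condition holds and produces a point $p_k$ with $\sup_{\Sigma \cap B(p_k, k/|A(p_k)|)} |A| \le 2|A(p_k)|$. The delicate balance to get right is keeping the total displacement bounded (so termination is forced inside a fixed compact set) while simultaneously letting the radius $k/|A|$ be large enough to absorb every fixed $\alpha$.

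Finally I would verify the two assertions. Since $|A|$ only increases along the path, $|A(p_k)| \ge |A(y_0)| \ge k(1+|y_0|)$, while $|p_k| \le |y_0| + 2$; therefore
$$\frac{|A(p_k)|}{1+|p_k|} \ge \frac{k(1+|y_0|)}{3+|y_0|} \ge \frac{k}{3} \longrightarrow \infty,$$
which gives the first claim. For the second, writing $\mu_k = |A(p_k)|$, any fixed $\alpha>0$ satisfies $B(p_k, \alpha/\mu_k) \subseteq B(p_k, k/\mu_k)$ as soon as $k \ge \alpha$, so the stopping inequality yields $\max\{|A(q)|/\mu_k ; q \in \Sigma \cap B(p_k, \alpha/\mu_k)\} \le 2$ for all large $k$, as required.
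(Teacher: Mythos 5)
Your proposal is correct and rests on essentially the same Schoen-type point-selection argument as the paper: chase points where $|A|$ more than doubles through balls of geometrically shrinking radius, bound the total displacement by a geometric series, and force termination via the boundedness of $|A|$ on the compact set $\Sigma \cap \overline{B(y_0,2)}$ coming from proper embeddedness (a step the paper leaves implicit). The only difference is bookkeeping: where the paper runs an induction on the integer radius $n$ and then diagonally extracts one sequence valid for all $\alpha$, you absorb every fixed $\alpha$ at once by letting the search radius $k/|A|$ grow with $k$ --- a slightly cleaner packaging of the same idea.
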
  
\begin{proof}(of the lemma)
The proof goes by induction. If $p \in \Sigma$, $|A(p)|>0$ and $\alpha>0$, define $$m(p,\alpha)=\max \{|A(q)|/|A(p)| ; q \in \Sigma \cap B(p, \alpha/|A(p)|)\}.$$ Let $n \in \mathbb{N}$. Denote by $\mathcal{P}(n)$ the following assertion:

There is a sequence $p_k \in \Sigma$ such that
$$|A(p_k)|/(1+||p_k||) \to \infty \text{  and  } \forall k\geq n , m(p_k,n)\leq 2.$$

Because $\mathcal{P}(0)$ is trivially true, the lemma will ensue by a diagonal extraction argument from the following claim: if $\mathcal{P}(n)$ is verified then by modifying $p_k$ if necessary (for $k\geq n+1$), we have $\mathcal{P}(n+1)$.

Let's check this claim: suppose that $\mathcal{P}(n)$ is true for the sequence $(x_k)$. Fix a $k$ bigger than or equal to $n+1$. One can suppose that $(n+1)/|A(x_k)|\leq1$. If $m(x_k,n+1)>2$ then there is a point $x_k^1 \in \Sigma  \cap B(x_k ,1/2)$ with $$|A(x_k^1)|/|A(x_k)| >2.$$
Likewise, if $m(x_k^1,n+1) > 2$, we can find $x_k^2 \in \SigmaÊ\cap B( x_k^1, 1/2^2 )$ such that
$$ |A(x_k^2)| / |A(x_k^1)| >2.$$ 

This construction goes on as long as $m({x_k^l},n+1) >2$. In fact, it has to stop because $d(x_k^l,x_k^{l+1}) \leq 1/2^{l+1}$ and $|A({x_k^l})| \geq 2^l |A(x_k)|$. Let's call $p_k$ the last ${x_k^l}$ constructed and of course define $x_k=p_k$ for $k\leq n$. The sequence $(p_k)$ verifies $\mathcal{P}(n+1)$. 

\end{proof}

Define now $\mathbf{S}_k =\mu_k . (\Sigma - p_k)$. The previous lemma shows that $\mathbf{S}_k$ is a sequence of surfaces such that at the origin, the Gauss curvature is $-1/2$ and the second fundamental form is locally uniformly bounded. Besides, (\ref{k}) implies that for all $a>0$ and all $x_k \in \Sigma \cap B(p_k,a/\mu_k)$, the quantity $H(x_k)/\mu_k$ goes to $ 0$. In other words, the mean curvature of $\mathbf{S}_k$ converges locally uniformly to $0$. The local uniform bound on $|A|$ means that in any ball of $\mathbb{R}^3$ small enough (the radius depends only on this bound), for $k$ large, $ \mathbf{S}_k$ is the graph of a function with bounded gradient and Hessian. Consequently, these functions satisfy uniformly elliptic equations with uniformly controlled coefficients. Thus by Schauder estimates and standard elliptic theory, for all $r>0$, the intrinsic balls $B^{\mathbf{S}_k}(0,r)$ converge subsequently in the $C^m$ topology to a smooth embedded surface with boundary called $\mathbf{T}_r$, which is in fact minimal. Define $\mathbf{S}$ to be the union of the $\mathbf{T}_r$ for $r>0$. It's an embedded complete minimal surface. Here the surfaces considered are all oriented embedded, the integral of $|A|^2$ is bounded uniformly and $\mathbf{S}$ is non flat. Consequently the convergence has in fact multiplicity one, i.e. $B^{\mathbf{S}_k}(0,r)$ converge smoothly with multiplicity one to $B^\mathbf{S}(0,r)= \mathbf{T}_r$. Besides, note that

\begin{equation}  \label{courbure totale}
\forall r>0 \quad   \int_{\mathbf{S} \cap B(0,r)} |A|^2 = 2\int_{\mathbf{S} \cap B(0,r)} |K| \leq \limsup_{k \to \infty} \int_{\mathbf{S}_k \cap B(0,r)} |A|^2 \leq \kappa.
\end{equation}

Now, let's cite two results which will imply that $\mathbf{S}$ is necessarily a catenoid.

The first well-known theorem of Osserman describes minimal surfaces with finite total curvature.
\begin{theo} \cite{Osserman}
Let $M \subset \mathbb{R}^3$ be a complete oriented minimal surface with Gauss curvature $K$ such that $\int_M |K| < \infty$. Then there exists a closed Riemann surface  $\tilde{M}$, a finite set of points $\{p_1, ... , p_k\} \in \tilde{M}$ and a conformal diffeomorphism between $M$ and $\tilde{M}\setminus \{p_1, ... , p_k\}$. Moreover, the Gauss map extends meromorphically across the punctures.
\end{theo}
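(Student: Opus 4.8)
The plan is to exploit the extra structure that minimality confers on $M$ and to feed the finite--total--curvature hypothesis into complex analysis. Since $M$ is minimal, isothermal coordinates make $M$ a Riemann surface, and the Weierstrass representation shows that the stereographically projected Gauss map $g : M \to \mathbb{CP}^1$ is meromorphic for this conformal structure. Because the Gauss map of a minimal surface is (anti)conformal, one has $|K|\,dA = g^{*}(dA_{\mathbb{S}^2})$ pointwise, so the hypothesis $\int_M |K| < \infty$ says precisely that the spherical area of the Gauss image, counted with multiplicity, is finite. This reformulation is the bridge that lets the geometric assumption be used as an analytic one.

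Next I would pin down the conformal type of $M$ by invoking \emph{Huber's theorem}: a complete Riemannian surface whose curvature has integrable negative part is finitely connected and is conformally a closed Riemann surface with finitely many points removed. Since $K \leq 0$ on a minimal surface, $\int_M |K| < \infty$ is exactly Huber's hypothesis, and his theorem furnishes the closed surface $\tilde{M}$, the finite puncture set $\{p_1,\dots,p_k\}$, and the conformal diffeomorphism $M \cong \tilde{M}\setminus\{p_1,\dots,p_k\}$ asserted in the statement.

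It then remains to extend $g$ meromorphically across each puncture. Choosing a conformal coordinate $z$ on a punctured disk $D^{*}=\{0<|z|<1\}$ around $p_j$, the map $g|_{D^{*}}$ is meromorphic, and by the reformulation above the spherical image area $\int_{D^{*}} g^{*}(dA_{\mathbb{S}^2})$ is finite. I would rule out an essential singularity at $z=0$: by the Casorati--Weierstrass/Picard picture, an essential singularity would force $g$ to assume (nearly) every value of $\mathbb{S}^2$ infinitely often on each punctured neighbourhood of $0$, making the image area infinite, a contradiction. Hence the singularity is removable or a pole, so $g$ extends meromorphically across $p_j$, which completes the argument.

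As for difficulty, the genuinely deep input is Huber's theorem controlling the conformal type of the ends, which I would simply cite. Within the remaining, more elementary part, the delicate point is the removable--singularity step, i.e. upgrading finiteness of the Gauss image area into meromorphic extension across the puncture; everything else is the now--standard dictionary between minimality, the conformal structure, and the Gauss map.
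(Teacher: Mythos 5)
The paper does not prove this statement: it is quoted verbatim as Osserman's theorem and used as a black box (via \cite{Osserman}), so there is no internal proof to compare against. Your argument is, in fact, the standard proof of the result and it is correct: the Weierstrass/isothermal-coordinate dictionary makes the Gauss map $g$ meromorphic, the identity $|K|\,dA = -K\,dA = \bigl|g^{*}(dA_{\mathbb{S}^2})\bigr|$ (valid since $K \leq 0$ on a minimal surface, the Jacobian of the Gauss map being $K$) converts finite total curvature into finite spherical image area counted with multiplicity, Huber's theorem settles the conformal type, and the puncture analysis finishes the extension. One precision is worth insisting on: Casorati--Weierstrass by itself does not suffice at the last step, since density of the image is compatible with finite area counted with multiplicity; what you need is the \emph{big Picard theorem}, which guarantees that near an essential singularity every value of $\mathbb{CP}^1$ with at most two exceptions is attained infinitely often, so the counting function $n(w)$ is infinite for almost every $w$ and $\int_{\mathbb{S}^2} n(w)\, dA(w) = \infty$, contradicting finite area. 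Since you do name Picard and phrase the contradiction through "infinitely often," your proof is sound as written; just be aware that the "/Picard" part of your hedge is the load-bearing one. This Huber-plus-Picard route is essentially Osserman's original argument, so had the paper included a proof, yours would coincide with it.
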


The following theorem, proved by L\'{o}pez and Ros, then Meeks and Rosenberg, classifies the properly embedded minimal surfaces with finite topology and genus $0$:

\begin{theo} [\cite{LopezRos}, \cite{MeeksRosenberg}]
The only properly embedded minimal surfaces of $\mathbb{R}^3$, with finite topology and genus $0$ are planes, helicoids and catenoids. 
\end{theo}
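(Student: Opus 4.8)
The plan is to prove the classification by splitting according to the number of ends, using the finite topology and genus-zero hypotheses to reduce $M$ to two well-understood regimes. Since $M$ has finite topology it is homeomorphic to a closed surface with finitely many points removed; the genus-zero assumption makes this $\mathbb{S}^2$ with $k\geq 1$ punctures, so $M$ is a planar domain and each of its $k$ ends is topologically an annulus. The first step is therefore purely topological: record that $k$ equals the number of ends and note that the three model surfaces fill exactly the slots $k=1$ (plane, helicoid) and $k=2$ (catenoid). I would then treat $k=1$ and $k\geq 2$ separately.

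For $k\geq 2$ the strategy is first to upgrade properness and finite topology to finite total curvature, and then to run the López--Ros deformation. The upgrade is Collin's theorem: a properly embedded minimal surface of finite topology with more than one end has finite total curvature. Granting this, the Weierstrass data $(g,dh)$ of $M$ extend meromorphically to the compactification $\mathbb{S}^2$ by Osserman's theorem cited above. The heart of the matter is then the López--Ros deformation: for $\lambda>0$ the data $(g_\lambda,dh_\lambda)=(\lambda g,\,dh)$ define a new minimal immersion $M_\lambda$ on the same conformal surface with the same ends, the period conditions remaining closed because on a genus-zero domain the only cycles encircle the ends and each embedded end has vertical flux. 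One analyzes how embeddedness propagates along this one-parameter family: using the catenoidal or planar asymptotics of the ends together with the maximum principle and a flux computation, one shows that if every $M_\lambda$ stays embedded then $M$ must admit a vertical axis of rotational symmetry, and a moving-plane/Alexandrov reflection then forces $M$ to be a surface of revolution, hence the catenoid (the plane being excluded since $k\geq 2$).

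For $k=1$ the surface is simply connected, and the goal is to show it is a plane or a helicoid; this is the Meeks--Rosenberg uniqueness theorem. I would invoke the Colding--Minicozzi structure theory for sequences of embedded minimal disks: rescaling where the curvature is large produces in the limit a lamination by parallel planes whose singular set is either empty, giving, after a removable-singularity and parabolicity argument, a plane, or a single line, about which the surface spirals like a helicoid. The conformal input is that a one-ended properly embedded simply connected minimal surface is conformally $\mathbb{C}$ (parabolic), so that $dh$ and $dg/g$ are globally defined; showing that $dh$ has no zeros and that $g$ omits $0$ and $\infty$ pins the Weierstrass data down to those of the helicoid.

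The main obstacle is that the two reductions rest on genuinely deep theorems rather than elementary arguments: Collin's finite-total-curvature theorem in the multi-ended case, and the Meeks--Rosenberg uniqueness of the helicoid in the one-ended case, the latter itself depending on the full Colding--Minicozzi lamination theory and on establishing parabolicity of the conformal structure. Everything else --- the topological bookkeeping, Osserman's representation, and the López--Ros deformation with its reflection argument --- is comparatively routine once those two inputs are granted, so I would concentrate the effort there, and in particular on the curvature-estimate and lamination analysis that controls the one-ended case.
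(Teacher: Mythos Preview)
The paper does not prove this theorem at all: it is quoted as a black-box result from \cite{LopezRos} and \cite{MeeksRosenberg} and used as an input in the proof of Theorem~\ref{MAIN}. So there is nothing to compare against --- your outline is not a reproduction of the paper's argument but a sketch of the original literature proofs.

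That said, as a sketch of those original proofs your outline is accurate in spirit. The split into $k=1$ and $k\geq 2$ ends, the invocation of Collin's theorem to get finite total curvature when $k\geq 2$, the L\'opez--Ros deformation on genus-zero Weierstrass data, and the appeal to Colding--Minicozzi lamination theory plus conformal parabolicity for the simply connected case are exactly the ingredients of the references cited. You correctly identify that the hard content is Collin's theorem and the Meeks--Rosenberg helicoid uniqueness, neither of which admits a short self-contained proof; everything you label ``comparatively routine'' is indeed routine once those are granted. For the purposes of this paper, however, none of this is needed: the author simply cites the classification and moves on.
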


By invariance under dilatations of integrals like $ \int_{\mathbf{S}_k \cap B(0,r)} |A|^2$, the first of the theorems above, the assumption on the curvature concentration and (\ref{courbure totale}) show that $\mathbf{S}$ has finite topology. Note that moreover, the normal vectors are well defined at each end, which implies that $\mathbf{S}$ is proper. Because the norm of the second fundamental form of $\Sigma$ at $p_k$ goes to infinity, $||p_k|| \to \infty$. Hence, $\mathbf{S}$ has genus $0$ (the convergence has multiplicity one). The second theorem, plus the fact that the Gauss curvature of $\mathbf{S}$ at the origin is $-1/2$, imply that $\mathbf{S}$ is necessarily a catenoid. Note that, as the multiplicity of the convergence is one, for $k$ large $\Sigma_k$ really looks like a small catenoid near $p_k$.

The end of the proof consists in finding a contradiction with the fact that $\mathbf{S}$ is a catenoid. Note that a posteriori, we could have chosen $p_k$ to have the origin on the closed simple geodesic of the catenoid $\mathbf{S}$, and also to make $p_k/||p_k||$ converge to a vector, say $v$. Let $(e'_1,e'_2,e'_3)$ be a basis of $\mathbb{R}^3$ such that $e'_3$ gives the rotation axis of $\mathbf{S}$. 

Let $c \subset \mathbf{S}$ be the closed simple geodesic which encircles the neck of $\mathbf{S}$. Let $(c_k)$ be a sequence of curves in $\Sigma$ corresponding to embedded curves in $\mathbf{S}_k$ converging smoothly to $ c \subset \mathbf{S}$. Let $\Sigma^+_k$ and $\Sigma^-_k$ be the two open connected components separated by $c_k$, $\Sigma^+_k$ being the component whose rescaling converge to $\{x \in \mathbf{S} ; \langle x , e'_3 \rangle>0\}$ (we can suppose that there are two distinct components because the genus of $\Sigma$ is finite and $||p_k|| \to \infty$). Note that seen from afar, a catenoid is like two superposed planes. So for $k$ large, $\Sigma^+_k$ and $\Sigma^-_k$ are two surfaces nearly parallel and flat around $p_k$ and glued together along $c_k$.

There are two cases: either $v$ is parallel to $e'_3$ or it isn't. 

Suppose that $v$ is parallel to $e'_3$, say $v=e'_3$. Then by applying Proposition \ref{piege}, we see that, for $k$ large, $\Sigma^+_k$ should be contained in a set looking like a long tube with boundary $c_k$ going to infinity and as narrow as $c_k$ near this little neck. But $\Sigma^+_k$ is more like a plane orthogonal to $e_3' = p_k/||p_k||$ with a small half neck at $p_k$. This is the desired contradiction.

Suppose otherwise that $v$ is not parallel to $e'_3$. The latter can be chosen so that $\langle e'_3 , v\rangle>0$. In this case, by the usual maximum principal, for all $k$ large
$$\Sigma^{\pm}_k \cap S(0,2) \neq \varnothing.$$
This observation plus the fact that the genus of $\Sigma$ is finite would imply that $\Sigma$ is not properly embedded, which is absurd.

\end{proof}

This theorem implies that translating solitons can help to understand regions of $\Sigma$ far from the origin:

\begin{coro} \label{translating}
Let $\Sigma\subset \mathbb{R}^3$ be as in the previous theorem. Consider a sequence $x_k \in \Sigma$ of points whose norm goes to infinity. Define $\mathbf{S}_k = ||x_k||.(\Sigma-x_k)$. Then subsequently, for all $d>0$, $B^{\mathbf{S}_k}(0,d)$ converge smoothly with multiplicity one to a surface with boundary $T_d$ and the union $T = \cup_d T_d$ is a complete embedded translating soliton of the mean curvature flow with genus $0$.
\end{coro}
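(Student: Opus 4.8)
The plan is to use the linear curvature bound of Theorem \ref{MAIN} to show that, after the parabolic rescaling, the second fundamental forms of the $\mathbf{S}_k$ are uniformly bounded on bounded intrinsic balls; then the same convergence machinery as in Theorem \ref{MAIN} produces the limit $T$, and a direct passage to the limit in the self-shrinker equation identifies $T$ as a translating soliton.

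First I would set $\lambda_k = \|x_k\|$ and consider the rescaling $\Phi_k : \Sigma \to \mathbf{S}_k$, $\Phi_k(y) = \lambda_k(y - x_k)$, which multiplies intrinsic lengths by $\lambda_k$ and divides the second fundamental form by $\lambda_k$. A point $z \in B^{\mathbf{S}_k}(0,d)$ is the image of some $y \in \Sigma$ with intrinsic, hence extrinsic, distance $|y - x_k| \le d/\lambda_k$, so that $|y| \le \lambda_k + d/\lambda_k$. By Theorem \ref{MAIN}, $|A_\Sigma(y)| \le C(1+|y|)$, whence
\[
|A_{\mathbf{S}_k}(z)| = \frac{1}{\lambda_k}\,|A_\Sigma(y)| \le \frac{C}{\lambda_k}\Bigl(1 + \lambda_k + \frac{d}{\lambda_k}\Bigr) \longrightarrow C \quad (k \to \infty).
\]
Thus $|A_{\mathbf{S}_k}|$ is uniformly bounded on every intrinsic ball $B^{\mathbf{S}_k}(0,d)$. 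This exact balancing of the linear bound against the rescaling factor $\lambda_k$ is the whole point of Theorem \ref{MAIN} and is what makes the blow-up around points escaping to infinity converge.

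With this uniform bound in hand, I would repeat the argument of Theorem \ref{MAIN}: in balls of fixed small radius each sheet of $\mathbf{S}_k$ is a graph with bounded gradient and Hessian, these graphs solve uniformly elliptic equations with uniformly controlled coefficients, and Schauder estimates give, after passing to a subsequence, smooth $C^m$ convergence of the intrinsic balls $B^{\mathbf{S}_k}(0,d)$ to a smooth embedded surface with boundary $T_d$. Tracking the single connected sheet through the base point $0 \in \mathbf{S}_k$ makes this convergence multiplicity one and the limit embedded. Setting $T = \bigcup_{d>0} T_d$ then gives an embedded surface, complete because every intrinsic ball $B^T(0,d) = T_d$ is realised. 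To identify $T$, after extracting a further subsequence with $x_k/\|x_k\| \to v$ for a unit vector $v$, I write $y = x_k + z/\lambda_k$ for $z \in \mathbf{S}_k$; since $\nu_{\mathbf{S}_k}(z) = \nu_\Sigma(y)$ and $H_{\mathbf{S}_k}(z) = \lambda_k^{-1} H_\Sigma(y)$, equation (\ref{self}) becomes
\[
H_{\mathbf{S}_k}(z) = \frac{1}{2}\Bigl\langle \frac{x_k}{\lambda_k},\, \nu_{\mathbf{S}_k}(z)\Bigr\rangle + \frac{1}{2\lambda_k^2}\langle z,\, \nu_{\mathbf{S}_k}(z)\rangle .
\]
On any bounded region the last term tends to $0$ while the first converges to $\tfrac12\langle v, \nu_T\rangle$, so in the limit $H_T = \tfrac12\langle v, \nu_T\rangle$, which is precisely the equation of a translating soliton with velocity $v/2$.

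Finally, for the genus, if some $T_d$ had positive genus, then for large $k$ the ball $B^{\mathbf{S}_k}(0,d)$, and hence a piece of $\Sigma$ contained in an extrinsic ball about $x_k$ of radius comparable to $d/\lambda_k$, would contain a handle. Since $\lambda_k \to \infty$ these balls have radius tending to $0$ while their centres $x_k$ escape to infinity, so one can extract a subsequence of pairwise disjoint such balls, producing infinitely many disjoint handles in $\Sigma$ and contradicting finiteness of the genus $g$; hence $T$ has genus $0$. I expect the main obstacle to be exactly the multiplicity-one and embeddedness of the limit: unlike in Theorem \ref{MAIN}, the non-flatness argument is unavailable here because $T$ may well be a plane, so this step must instead rest on the connectedness of the intrinsic balls together with the uniform curvature bound, rather than on a lower bound for the total curvature of the limit.
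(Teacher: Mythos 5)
Your overall skeleton follows the paper: the computation that the rescaling turns the linear bound of Theorem \ref{MAIN} into a uniform pointwise bound $|A_{\mathbf{S}_k}|\leq C'$ on compact regions, the passage to the limit in the self-shrinker equation giving $H_T=\tfrac12\langle v,\nu_T\rangle$ (the paper states exactly this limiting equation), and the disjoint-handles argument for genus $0$ (the paper compresses this to ``multiplicity one and $\|x_k\|\to\infty$'') are all in substance what the paper does. The genuine gap is precisely where you flag it yourself: multiplicity one. Your proposed substitute --- ``connectedness of the intrinsic balls together with the uniform curvature bound'' --- does not close it. A pointwise bound $|A_{\mathbf{S}_k}|\leq C'$ is perfectly compatible with $B^{\mathbf{S}_k}(0,d)$ containing two almost-parallel sheets, at extrinsic distance $\epsilon_k\to 0$ near the base point, joined \emph{inside the ball} by a hairpin of curvature comparable to $C'$; the ball is then connected, the curvature is bounded, and yet the limit is either a leaf covered with multiplicity two or two translator sheets touching tangentially at an interior point. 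Ruling this out requires an actual argument (strong maximum principle for the touching case, and something stronger for genuine sheeting), none of which your proposal supplies.

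The paper closes this gap using the hypothesis your proof never invokes: the bounded curvature concentration of Definition \ref{concentration}. Since $\int|A|^2$ is scale invariant for surfaces, covering $B(x_k,s/\|x_k\|)$ by concentration balls $B(x,1/(2\|x\|))$ yields the locally uniform total curvature bound (\ref{viande}), namely $\int_{\mathbf{S}_k\cap B(0,s)}|A|^2\leq c(s)$ for all $k$. With this in hand the paper runs a dichotomy on a small graphical ball $B^{\mathbf{S}_k}(0,d')$: if its limit is non-flat, the uniform local $L^2$ curvature bound together with orientability and embeddedness forces multiplicity one exactly as in the proof of Theorem \ref{MAIN}; if its limit is flat, then --- since each $T_d$ is a piece of translating soliton, hence a minimal surface after a conformal change of metric --- unique continuation propagates flatness to every $T_d$, and a flat limit is incompatible with interior folding (a hairpin carries a definite amount of curvature that would survive the smooth convergence). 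You correctly diagnose that the non-flatness argument of Theorem \ref{MAIN} is unavailable verbatim because $T$ may be a plane, but the fix is this flat/non-flat dichotomy resting on (\ref{viande}) and unique continuation, not connectedness of intrinsic balls. As written, your proof is incomplete at its self-identified crux, and it omits the one hypothesis (curvature concentration) that makes the crux tractable.
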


\begin{proof}
One can suppose that $x_k/||x_k||$ converge to a vector $u$. Theorem \ref{MAIN} implies that the norm of the second fundamental form of $\Sigma$ grows at most linearly. Hence, the second fundamental form of the rescalings $\mathbf{S}_k$ is bounded. Then the convergence of $B^{\mathbf{S}_k}(0,d)$ to a $T_d$ (up to a subsequence) is shown as in the proof of the previous theorem. Moreover, each limit $T_d$ satisfies
$$H = \frac{\langle u, \nu \rangle}{2}$$
which is the equation of translating solitons.
Note that this time the integral of $|A|^2$ is uniformly bounded only on fixed compact sets, that is 
\begin{equation} \label{viande}
\forall s>0, \exists c=c(s), \forall k, \int_{\mathbf{S}_k\cap B(0,s)} |A|^2 \leq c.
\end{equation}
Take a $d'$ small enough (depending on the constant $C(\Sigma)$ in the theorem) so that $B^{\mathbf{S}_k}(0,d')$ are graphs. Then if $B^{\mathbf{S}_k}(0,d')$ converge to a non flat surface, the multiplicity of the convergence of $B^{\mathbf{S}_k}(0,d)$ for all $d>0$ has to be one because of (\ref{viande}). If $B^{\mathbf{S}_k}(0,d')$ converge to a flat surface, then by unique continuation $B^{\mathbf{S}_k}(0,d)$ converge to a flat surface for all $d>0$ (remember that each $T_d$ is a piece of translating soliton, which is a minimal surface after a conformal change of metric). These arguments show that the convergence of $B^{\mathbf{S}_k}(0,d)$ to $T_d$ has to be one for all $d$.
The genus of $T$ is zero because the convergence has multiplicity one and because $||x_k|| \to \infty$.

\end{proof}

For the end of this section, we give a result analogous to Theorem \ref{MAIN}, but for surfaces with $\delta$-separated ends, so that the constant this time won't depend on the surface itself but on some geometric parameters mentioned in the following definitions:

\begin{definition} \label{bounded}
Let $R>0$. A complete properly embedded surface $\mathcal{S} \subset \mathbb{R}^3$ with finite genus $g(\mathcal{S})$ is said to have topology bounded by $R$ if the genus of $B(0,R) \cap \mathcal{S}$ is equal to $g(\mathcal{S})$.
\end{definition}

\begin{definition}
Define $\mathfrak{F}^{\delta, \kappa}_{g,R}$ the family of complete self-shrinking surfaces properly embedded in $\mathbb{R}^3$ with finite topology such that: 
\begin{enumerate}
\item the genus is less than $g$,
\item the topology is bounded by $R$,
\item the curvature concentration is bounded by $\kappa$,
\item the ends are $\delta$-separated.
\end{enumerate}
\end{definition}

The proof of the following theorem is similar to the previous one but the new point is that the assumption on the ends of the self-shrinking surfaces enables us to use Corollary \ref{comparaison}. More precisely, small catenoid-like necks on self-shrinking surfaces with $\delta$-separated ends are ruled out by this corollary.

\begin{theo}  \label{main}
There exists a constant $C=C(\delta, \kappa,g,R)$ depending only on, $\delta$, $\kappa$, $g$ and $R$ such that
$$\forall \mathcal{S} \in \mathfrak{F}^{\delta, \kappa}_{g,R},\quad \forall x \in \mathcal{S}, \quad |A(x)| \leq C(1+||x||).$$
\end{theo}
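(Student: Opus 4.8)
The plan is to run the blow-up of Theorem \ref{MAIN} \emph{uniformly} over the family $\mathfrak{F}^{\delta,\kappa}_{g,R}$, replacing the surface-by-surface argument that closes the proof of that theorem by a single application of Corollary \ref{comparaison}, whose hypotheses are controlled entirely by $\delta,\kappa,g,R$. So I would argue by contradiction: if no uniform constant works, there are surfaces $\mathcal{S}_j\in\mathfrak{F}^{\delta,\kappa}_{g,R}$ and points $x_j\in\mathcal{S}_j$ with $|A(x_j)|/(1+\|x_j\|)\to\infty$, and the goal is to produce a contradiction whose ingredients depend only on the four parameters.

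First I would use the smooth compactness of self-shrinkers \cite{CM2} to force the bad points to escape to infinity. Since the genus is $\leq g$ and the curvature concentration is bounded by $\kappa$, a subsequence of the $\mathcal{S}_j$ converges in $C^\infty_{\mathrm{loc}}$, so $|A|$ is uniformly bounded on every fixed compact set of $\mathbb{R}^3$; hence $|A(x_j)|\to\infty$ already forces $\|x_j\|\to\infty$. Next, applying the point-selection argument of Lemma \ref{technical} to each $\mathcal{S}_j$ and extracting diagonally in the auxiliary parameter, I obtain points $p_j\in\mathcal{S}_j$ with $\mu_j:=|A(p_j)|\to\infty$, $\mu_j/(1+\|p_j\|)\to\infty$, and $\max_{B(p_j,\alpha/\mu_j)}|A|\leq 2\mu_j$ for each fixed $\alpha$ and $j$ large. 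The rescalings $\mathbf{S}_j=\mu_j(\mathcal{S}_j-p_j)$ then have locally bounded second fundamental form, Gauss curvature $-1/2$ at the origin, mean curvature tending locally uniformly to $0$ (because $\|p_j\|/\mu_j\to0$), and, by the curvature-concentration hypothesis and the computation (\ref{courbure totale}), total curvature $\leq\kappa$. Exactly as in Theorem \ref{MAIN}, Osserman's theorem together with the L\'{o}pez--Ros/Meeks--Rosenberg classification forces the smooth multiplicity-one limit $\mathbf{S}$ to be a catenoid: it is non-flat, of finite total curvature, properly embedded, and of genus $0$, the last point because $\|p_j\|\to\infty$ while the topology is bounded by $R$.

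The decisive step, and the place where the $\delta$-separation enters, is to rule out this catenoidal neck. Let $c_j\subset\mathcal{S}_j$ be the curves corresponding to the waist geodesic of $\mathbf{S}$; they are bounded and separate $\mathcal{S}_j$ into two components $\mathcal{S}^\pm_j$, giving an admissible partition with an embedded $\alpha(\mathbb{S}^1)=c_j$. Because the convergence $\mathbf{S}_j\to\mathbf{S}$ is smooth with multiplicity one, for $j$ large the neck of $\mathcal{S}_j$ is $C^m$-close to a genuine catenoid, so that near $c_j$ the two sheets come within $O(1/\mu_j)$ of each other while $c_j$ sits at distance $\sim\|p_j\|\to\infty$ from the origin. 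Choosing $e_3$ in a positive-measure set $A$ of directions near the catenoid axis, tilted slightly off it, the two sheets become interleaved graphs over the hyperplane $e_3^{\perp}$ and conditions $(1)$--$(3)$ of Corollary \ref{comparaison} hold near $c_j$, the tilt being precisely what moves the minimum of the height-difference function off the waist, where the normal would otherwise be horizontal. The $\delta$-separation is then exactly what guarantees, inside Corollary \ref{comparaison}, that this height-difference function attains its minimum at a finite point rather than escaping to infinity, so that Proposition \ref{PMsimple} applies. Corollary \ref{comparaison} therefore concludes that $\mathcal{S}_j$ is not a self-shrinker, the desired contradiction; and since every estimate used is controlled by $\delta,\kappa,g,R$ alone, the resulting constant $C$ depends only on these four parameters.

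The main obstacle I anticipate is this last step: checking that the approximately-catenoidal neck of $\mathcal{S}_j$ genuinely satisfies the interface conditions $(1)$--$(3)$ of Corollary \ref{comparaison} for a positive-measure set of directions $e_3$, uniformly in $j$. Concretely one must verify that, after the small tilt of $e_3$ away from the catenoid axis, the waist region presents two honest graphs with the required vertical ordering and with the minimal vertical separation realized at a point where $\langle\nu,e_3\rangle\neq0$; the fact that the two sheets come within $O(1/\mu_j)$ of one another while the waist lies at distance $\sim\|p_j\|\to\infty$ is what makes the configuration incompatible with $\delta$-separation, so that the corollary bites. Keeping all of this uniform over the family, rather than for a single fixed surface as in Theorem \ref{MAIN}, is the essential new difficulty, and it is what the compactness \cite{CM2} together with the uniform bounds $\delta,\kappa,g,R$ is designed to supply.
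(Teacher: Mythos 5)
Your blow-up stage is essentially the paper's: the point selection of Lemma \ref{technical2}, the Colding--Minicozzi compactness to force $\|p_j\|\to\infty$, total curvature bounded by $\kappa$ as in (\ref{courbure totale}), genus $0$ from the topology bound $R$, and the Osserman plus L\'{o}pez--Ros/Meeks--Rosenberg classification giving a multiplicity-one catenoid limit $\mathbf{S}$. The gap is in your ``decisive step''. You propose to kill the neck by a single application of Corollary \ref{comparaison} with $e_3$ a slight tilt of the catenoid axis, claiming conditions $(1)$--$(3)$ can be checked ``near $c_j$''. But these conditions are not local to the neck: with $e_3$ close to the axis $e'_3$, near $c_j$ the sheet $\mathcal{S}^+_j$ lies \emph{above} $\mathcal{S}^-_j$, so the height-drop function $h$ is infinite (or uncontrolled) there, and condition $(1)$ --- a point of $\mathcal{S}^+_j$ strictly \emph{below} a point of $\mathcal{S}^-_j$ along $e_3$ --- is a global statement about where the two components go. (A small tilt does not create the required ordering at the scales you control: the catenoid-like region has extrinsic size $O(r/\mu_j)\to 0$, and the sheets' logarithmic separation beats any fixed tilt there.) Whether $(1)$--$(3)$ hold depends on $v=\lim p_j/\|p_j\|$ relative to $e'_3$, which is exactly why the paper's proof splits into cases: (i) if $\mathcal{S}^+_j$ meets the self-shrinking sphere $S(0,2)$ (always the case when $\langle v,e'_3\rangle>0$ and $v\neq e'_3$, by the usual maximum principle), Corollary \ref{comparaison} does apply, but with $e_3=(p_j-y)/\|p_j-y\|$ for $y\in\mathcal{S}^+_j\cap S(0,2)$, so that condition $(1)$ is supplied by the global dip of $\mathcal{S}^+_j$ near the origin, not by a tilt of the axis; (ii) if $v=e'_3$ and $\mathcal{S}^+_j$ misses $S(0,2)$, no nearby direction makes condition $(1)$ true, and the paper instead invokes Proposition \ref{piege} to trap $\mathcal{S}^+_j$ in a thin tube, contradicting that it is plane-like far from the neck; (iii) if $v\perp e'_3$, the graph comparison is abandoned altogether: the paper uses the asymptotic cones of Theorem \ref{cone} and rotates $\mathcal{S}^+_j$ by a uniform angle $\theta_0$ about $e_0=v\wedge e'_3$, contradicting the usual maximum principle. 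Your single uniform-tilt argument covers neither (ii) nor (iii), and even in case (i) it misidentifies the mechanism.

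Relatedly, your reading of where $\delta$-separation bites is incorrect. The fact that the two sheets come within $O(1/\mu_j)$ of each other at distance $\sim\|p_j\|$ from the origin does \emph{not} contradict $\delta$-separation: Definition \ref{separate} only constrains $(\frac{1}{r}\mathcal{S}_i)\setminus B(0,1)$ for $r$ sufficiently large (depending on the surface and the partition), and for $r>\|p_j\|$ the neck is absorbed into the unit ball, so a thin neck at finite radius is perfectly compatible with $\delta$-separated ends. In the actual proof $\delta$ enters twice, in two different ways: as the hypothesis of Corollary \ref{comparaison} guaranteeing that $h$ attains a finite minimum rather than escaping along the ends (this part you did identify), and, in case (iii), quantitatively through the cone separation $d(K^+_j,K^-_j)\geq\delta$, which yields the $j$-independent rotation angle $\theta_0$ and the attainment of the infimum distance between the rotated components. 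Without the trichotomy on $v$ and these two distinct uses of $\delta$, the contradiction is not established, so the concluding step of your proposal has a genuine gap.
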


\begin{proof}

The beginning of the proof is nearly identical to the previous one, except that this time we consider $p_k \in \mathcal{S}_k \in \mathfrak{F}^{\delta, \kappa}_{g,R} $ hypothetically contradicting the theorem, define $\mathbf{S}_k =\mu_k . (\mathcal{S}_k - p_k)$ and use this lemma:

\begin{lemme} \label{technical2}
Under the hypotheses of the theorem, there is a sequence $(p_k)$ with $p_k \in \mathcal{S}_k \in \mathfrak{F}^{\delta,\kappa}_{g,R} $ such that
 $$ \forall \alpha>0 , \quad |A(p_k)|/(1+||p_k)|) \to \infty  \text{ and }$$
 $$\max \{|A(q)|/\mu_k ; q \in \mathcal{S}_k \cap B(p_k, \alpha/\mu_k)\}  \leq 2 \text{  for large $k$.   } $$

\end{lemme}  

As before, we get a limit $\mathbf{S}$. To prove that it is once again a catenoid, the same arguments work if we can prove that the genus of $\mathbf{S}$ is $0$, which was clear when only one surface was considered. We use a theorem of compactness for self-shrinkers by Colding and Minicozzi.

\begin{theo}[\cite{CM2}]
Let $g$ be a integer. The space $\mathcal{F}_g$ of properly embedded self-shrinkers without boundary, with finite topology and genus less than $g$ is compact for the topology of $C^m$ convergence on compacts.
\end{theo}

\begin{remarque}
Colding and Minicozzi initially supposed that the volume growth was quadratic, with is a natural assumption in the context of limit surfaces for mean curvature flow flowing from embedded closed surface \cite{CM1}. This hypothesis is in fact always true as long as the self-shrinker is proper, as shown by Ding and Xin \cite{DingXin}.
\end{remarque}

From this theorem and the fact that $\mathfrak{F}^{\delta, \kappa}_{g,R} \subset \mathcal{F}_g$, we deduce that $||p_k|| \to \infty$. The topology being uniformly bounded by $R$, $\mathbf{S}$ has indeed genus $0$ (remind that the convergence of $B^{\mathbf{S}_k}(0,r)$ to $B^\mathbf{S}(0,r)$ has multiplicity one).

The end of the proof is based on Corollary \ref{comparaison}, as explained previously. A posteriori, we could have chosen $p_k$ to have the origin on the closed simple geodesic of the catenoid $\mathbf{S}$, and also to make $p_k/||p_k||$ converge to a vector, say $v$. Let $(e'_1,e'_2,e'_3)$ be a basis of $\mathbb{R}^3$ such that $e'_3$ gives the rotation axis of $\mathbf{S}$. We denote by $P$ the plane containing $e'_1$ and $e'_2$.

We will distinguish two cases: either the limit $v$ is not in the plane $P$, or $v$ is in $P$ (i.e. $v$ is orthogonal to $e_3'$). 
 
 Case 1.
 
Suppose that $v$ is not in the plane $P$. Let $c \subset \mathbf{S}$ be the closed simple geodesic which encircles the neck of $\mathbf{S}$. Note that because of the choice of $p_k$ in Lemma \ref{technical2} and because $\mathbf{S}$ and $ \tilde{\mathbf{S}} \subset \mathbb{R}^3$ necessarily intersect if $\tilde{\mathbf{S}}$ is a plane or a catenoid, one can suppose that $B(0,k) \cap \mathbf{S}_k$ has only one connected component for all $k$. Let $(c_k)$ be a sequence of curves in $\mathcal{S}_k$ corresponding to embedded curves in $\mathbf{S}_k$ converging smoothly to $ c \subset \mathbf{S}$. We can suppose that $\langle e'_3 , v\rangle>0$. Let $\mathcal{S}^+_k$ and $\mathcal{S}^-_k$ be the two open connected components separated by $c_k$, $\mathcal{S}^+_k$ being the component whose rescaling converge to $\{x \in \mathbf{S} ; \langle x , e'_3 \rangle>0\}$ (we can suppose that there are two disctinct components because the topology of the $\mathbf{S}_k$ is bounded and $||p_k|| \to \infty$). Note that seen from afar, a catenoid is like two superposed planes. So for $k$ large, $\mathcal{S}^+_k$ and $\mathcal{S}^-_k$ are two surfaces nearly parallel and flat around $p_k$ and glued together along $c_k$.
 
Suppose first that $\mathcal{S}^+_k$ intersects the self-shrinking sphere $S(0,2)$: this is in fact always the case if $v \neq e_3'$ by the usual maximum principle, because then for $k$ large there are points in $\mathcal{S}^+_k$ which are closer to the origin than any points on $c_k$. Then, we can use Corollary \ref{comparaison}. Indeed, take $y \in \mathcal{S}^+_k \cap S(0,2)$. By changing the canonical basis if necessary, we can suppose that  $e_3 = (p_k-y)/||p_k-y||$. Let's check for instance the second hypothesis of Corollary \ref{comparaison}. For every point $q$ in a tubular neighborhood of $c_k$, for $k$ large, the half-line $[q,y)$ first touches $\mathcal{S}^-_k$ before arriving at $y$ (recall that $B(0,k) \cap \mathbf{S}_k$ has only one connected component for all $k$.). By the same kind of arguments, the hypothesis of Corollary \ref{comparaison} are all verified. So $\mathcal{S}_k$ is not a self-shrinker, which is absurd.
 
Suppose now that $\mathcal{S}^+_k$ does not intersect the self-shrinking sphere $S(0,2)$: necessarily $v=e'_3$. By applying Proposition \ref{piege}, we see that, for $k$ large, $\mathcal{S}^+_k$ should be contained in a set looking like a long tube with boundary $c_k$ going to infinity and as narrow as $c_k$ near this little neck. But $\mathcal{S}^+_k$ is more like a plane orthogonal to $e_3' = p_k/||p_k||$ with a small half neck at $p_k$. This is the desired contradiction.

Case 2.

Suppose that $v \in P$. Choose a sequence of embedded curve $(c_k) \subset \mathcal{S}_k$ as before, let $\mathcal{S}^+_k$ and $\mathcal{S}^-_k$ be the two open connected components separated by $c_k$, $\mathcal{S}^+_k$ being the component whose rescaling converge to $\{x \in \mathbf{S} ; \langle x , e'_3 \rangle>0\}$. By Theorem \ref{cone}, we know that $\lambda . \mathcal{S}^+_k$ (resp. $\lambda . \mathcal{S}^-_k$) converge (when $\lambda\to 0$) to a cone whose intersection with $S(0,1)$ is called $K^+_k$ (resp. $K^-_k$). By $\delta$-separation of the ends of $\mathcal{S}_k$, $d(K^+_k,K^-_k) \geq \delta$. Define $\Theta_{e,\theta}$ the linear rotation with axis $\mathbb{R}e$ and angle $\theta$. Consider the axis $e_0=v \wedge e'_3$. Because $\mathcal{S}_k$, $d(K^+_k,K^-_k) \geq \delta$, for $\theta$ sufficiently small independent of $k$, there is a $\overline{r}$ depending on $k$ such that if $r>\overline{r}$, 
$$\Theta_{e,\theta}(\mathcal{S}^+_k)  \setminus B(0,r) \cap \mathcal{S}^-_k \setminus B(0,r) = \varnothing.$$
Now, fix an angle $0<\theta_0<\pi$ sufficiently small. With these choices,
$$\forall k \text{ large, }\Theta_{e_0,\theta_0}(\mathcal{S}^+_k)  \cap \mathcal{S}^-_k = \varnothing.$$
Indeed, if it is not true for $k'$, then consider the first $\theta$ between $0$ and $\theta_0$ such that $\Theta_{e,\theta}(\mathcal{S}^+_{k'}) $ touches $\mathcal{S}^-_{k'}$. Then these surfaces are in contact at a point in the ball $B(0,\overline{r})$ and not outside. But that contradicts the usual maximum principle. We can finally conclude, because for $k$ large, $$\inf \{ ||x-y||;x\in \Theta_{e_0,\theta_0}(\mathcal{S}^+_k) , y\in \mathcal{S}^-_k \}$$ will be achieved since $\theta_0$ is independent of $k$: once again the usual maximum principle is violated so $\mathcal{S}_k$ can't be self-shrinkers for $k$ large, which is absurd.

\end{proof}

\bibliographystyle{plain}
\bibliography{bibliographiedu051214}

\end{document}